\documentclass{amsart}
\usepackage{float}
\usepackage{subfiles}
\usepackage{amsmath,amsfonts,amscd,amssymb,color}
\usepackage{float}
\usepackage{hyperref}
\usepackage{layout}
\usepackage{graphicx}
\usepackage{cite}
\usepackage{color}
\usepackage{appendix}
\usepackage{amssymb,latexsym,amsthm}
\usepackage{amsfonts}
\usepackage{mathtools}
\usepackage{enumitem}
\usepackage{multicol}
\usepackage{multirow}
\SetLabelAlign{CenterWithParen}{(\makebox[0.8em]{#1})}

\usepackage{xcolor}

\usepackage{nomencl}
\makenomenclature

\usepackage{tikz-cd}
\usetikzlibrary{matrix, calc, arrows}
\usetikzlibrary{arrows.meta}

\def\newmathop#1{\expandafter\gdef\csname #1\endcsname{\mathop{\rm #1}\nolimits}}
\def\newvmathop#1{\expandafter\gdef\csname v#1\endcsname{\mathop{\rm #1}\nolimits}}

\theoremstyle{plain}
\newcounter{thmcount}[section]

\newtheorem{theorem}[thmcount]{Theorem}

\newtheorem{lemma}[thmcount]{Lemma}

\newtheorem*{conjecture*}{Conjecture}
\newtheorem*{theorem*}{Theorem}
\newtheorem*{corollary*}{Corollary}
\newtheorem*{lemma*}{Lemma}

\newtheorem*{rough*}{Rough idea}

\newtheorem*{motivation*}{Motivation}

\newtheorem*{goal*}{Goal}

\theoremstyle{definition}
\newtheorem{remark}[thmcount]{Remark}
\newtheorem{example}[thmcount]{Example}
\newtheorem*{example*}{Example}
\newtheorem*{remark*}{Remark}
\newtheorem{definition}[thmcount]{Definition}

\numberwithin{equation}{section}

\def\Q{{\mathbb Q}}
\def\F{{\mathbb F}}

\def\Fpb{{\overline{\mathbb F}_p}}

\def\Qp{{{\mathbb Q}_p}}
\def\Qb{\overline{\mathbb Q}}

\def\Qpb{{\overline{\mathbb Q}_p}}
\def\Z{{\mathbb Z}}

\def\C{{\mathbb C}}

\newmathop{Disc}
\newmathop{Tr}
\newmathop{Norm}
\newmathop{ord}
\newmathop{GL}
\newmathop{SL}
\newmathop{PGL}
\newmathop{PSL}
\newmathop{Hom}
\newmathop{Ind}
\newmathop{Res}
\newmathop{rk}
\newmathop{corank}
\newmathop{coker}
\newmathop{codim}
\newmathop{cyc}
\newmathop{Reg}
\newmathop{Gal}
\newmathop{Sel}

\newmathop{BSD}
\newmathop{tors}
\newmathop{tr}
\newmathop{Ext}
\newmathop{Mod}

\newmathop{id}
\newmathop{Aut}
\newmathop{Art}

\newmathop{Rep}
\DeclareMathOperator{\HT}{HT}

\DeclareMathOperator{\Ha}{Ha}

\renewcommand{\det}{\operatorname{det}}

\newcommand{\mtwo}[4]{\begin{pmatrix} #1 & #2  \\ #3 & #4 \end{pmatrix}}
\newcommand{\mtwosmall}[4]{\begin{psmallmatrix} #1 & #2  \\ #3 & #4 \end{psmallmatrix}}
\newcommand{\tta}[1]{\tilde{#1}}

\newcommand{\Frob}{\text{Fr}}
\newcommand{\ftau}[1]{\Frob^{#1} \circ \tau}

\title[Partial weight one modularity]{Partial weight one modularity for Galois representations associated to mod $p$ Hilbert modular forms}
\author{Hanneke Wiersema}
\date{\today}
\email{hanneke.wiersema@maths.ox.ac.uk}

\begin{document}
\maketitle

\begin{abstract}
Let $p$ be an odd prime. Let $\rho: G_F \to \GL_2(\Fpb)$ be a Galois representation of a totally real field $F$. For a small partial weight one weight $(k,0)$, we prove that modularity of $\rho$ can be characterised using $p$-adic Hodge theory, as conjectured by Diamond and Sasaki. We show that if $\rho$ is modular with respect to a partial weight one mod $p$ Hilbert modular form, then each of its local representations has a crystalline lift with prescribed Hodge--Tate weights. Conversely, if for each $v|p$ the restriction $\rho|_{G_{F_v}}$ has a crystalline lift with certain irregular weights, we show that $\rho$ arises from a partial weight one Hilbert modular form. Our method consists of translating results from regular to irregular weights. We do this globally, relating modularity of regular weights to modularity of irregular weights and vice versa, and also use the local, $p$-adic Hodge theory analogue of this, which is recent work of the author.
\end{abstract}

\section{Introduction}
Let $\rho: \Gal(\overline{F}/F) \to \GL_2(\Fpb)$ be a mod $p$ Galois representation, where $F \neq \Q$ is a totally real field. If $\rho$ is irreducible, continuous and totally odd, then $\rho$ is believed to be modular in the sense that it is equivalent to the reduction of the Galois representation of a characteristic zero Hilbert modular form. We will consider modularity in characteristic $p$, instead saying $\rho$ is modular if it is equivalent to the mod $p$ Galois representation attached to a mod $p$ Hilbert modular form. These viewpoints differ, as not every mod $p$ Hilbert modular form can be lifted to one in characteristic zero. 

Consider this when $F=\Q$. In this case, Serre predicted modularity of Galois representations $\rho$ as above by considering mod $p$ Galois representations associated to modular forms in characteristic zero \cite{serreduke}. This prediction is now known due to Khare and Wintenberger \cite{kharewintenberger}, \cite{khare}, building on work of many people including Edixhoven, who, in contrast to Serre, studied modularity using geometric mod $p$ modular forms\cite{Edixhoven1992}.

In the classical setting these two viewpoints are equivalent for all modular forms with weight $k \geq 2$, as such forms lift from characteristic $p$ to characteristic zero. The only difference concerns Galois representations corresponding to weight one modular forms. The mod $p$ Galois representations that come from (geometric) mod $p$ weight one modular forms are exactly the representations that are unramified at $p$ (see \cite{Edixhoven1992}). In the Hilbert modular form setting this becomes more complex, and it is on this complexity that we focus, inspired by recent work of Diamond and Sasaki \cite{DS}. Throughout we will assume that $p$ is an odd prime and that $p$ is unramified in $F$ (note the recent work of Diamond and Sasaki \cite{DSram} in the ramified case). 

In this paper we study Galois representations expected to correspond to partial weight one mod $p$ Hilbert modular forms, which are forms that do not necessarily lift to characteristic zero. Note that such Galois representations also need not be unramified at places above $p$. This makes it less clear how to characterise the modularity of Galois representations coming from partial weight one forms. Diamond and Sasaki provide a conjectural answer to this question which suggests that one can use $p$-adic Hodge theory to do this, as long as the weights lie in a certain minimal weight cone. More specifically, a Galois representation should arise from a mod $p$ Hilbert modular form of weight $(k,l)$ in this cone if and only if for each $v|p$ the restriction $\rho|_{G_{F_v}}$ has a crystalline lift with Hodge--Tate weights $(k_{\tau},l_{\tau})_{\tau \in \Sigma_v}$ \cite[Conjecture 7.3.2]{DS}. This statement includes the partial weight one weights in the minimal weight cone, which are examples of irregular weights.  

We say a weight $(k,l)$ is \emph{regular} if $k_{\tau} \geq 2$ for all embeddings $\tau: F \to \Qb$, otherwise the weight is \emph{irregular}. For regular weights the connection between modularity and $p$-adic Hodge theory is apparent in the work of Buzzard, Diamond and Jarvis \cite{bdj}. They define a notion of modularity with respect to mod $p$ representations of $\GL_2(\mathcal{O}_F/p)$ in terms of $\rho$ appearing in the cohomology of certain Shimura curves over $F$. This reinterpretation of modularity is commonly referred to as \emph{algebraic} modularity.

For a fixed $\rho$, Buzzard, Diamond and Jarvis provide a conjectural weight recipe in this setting using $p$-adic Hodge theory, motivated by the study of the reductions of crystalline representations. The conjecture is now proven under some technical hypotheses in \cite{geekisin},\cite{GLS14}, and \cite{newton}.  Diamond and Sasaki conjecture that \emph{geometric} modularity, i.e. being modular with respect to a (geometric) mod $p$ Hilbert modular form, and algebraic modularity are equivalent for regular weights \cite[Conjecture 7.5.2]{DS}. There are  results towards this due to Diamond and Sasaki, who prove the direction from algebraic to geometric \cite{DSram}, and Yang \cite{yang}, which we describe in more detail in Section \ref{mainresultsection}. We emphasise that the notion of algebraic modularity is not even defined for irregular weights.

Let us say a little about what is known about irregular weights.  If $\rho$ corresponds to a mod $p$ Hilbert modular form of parallel weight one, then by work of \cite{dimitrovwiese} (and also \cite{emertonreduzzixiao}), we know that the Galois representation is unramified at all $v|p$, under some mild hypotheses. Moreover, by recent work of De Maria \cite{demaria}, if we have a mod $p$ Hilbert modular form of paritious weight where the weight is parallel one above some place $v|p$, we know that then indeed the Galois representation will be unramified at this place. Conversely, suppose $\rho: G_F \to \GL_2(\Fpb)$ is unramified at all places above $p$, which implies each of the restrictions $\rho|_{G_{F_v}}$ has a crystalline lift of parallel weight one, then, by work of Gee and Kassaei \cite{geekassaei}, under some hypotheses, we know that there exists a mod $p$ Hilbert modular form of parallel weight one whose Galois representation is equivalent to $\rho$.

We do not consider these parallel weight one cases, but focus on partial weight one. The results mentioned above are specific to parallel weight one and as such do not have an easy extension to partial weight one. Our main result shows that for partial weight one weights indeed one can characterise the modularity of $\rho$ in terms of the existence of crystalline lifts:

\begin{theorem}[Theorem \ref{locglobviceversa}]
\label{thmintro}
Let $F$ be a totally real field in which $p$ is unramified, and suppose $p$ is odd. Suppose that $\rho:G_F \to \GL_2(\Fpb)$ is irreducible and modular. Suppose that for $\rho$ the Buzzard--Diamond--Jarvis conjecture holds and that for regular weights algebraic and geometric modularity are equivalent. Let $(k,0)$ be an irregular weight satisfying the conditions as in Theorem \ref{locglobviceversa}.

Then $\rho$ is geometrically modular of weight $(k,0)$ if and only if $\rho|_{G_{F_v}}$ has a crystalline lift of weight $(k_{\tau},0)_{\tau \in \Sigma_v}$ for all $v \mid p$.
\end{theorem}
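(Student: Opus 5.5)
The plan is to move from the irregular weight $(k,0)$ to regular weights and back, both globally and locally, and then invoke the Buzzard--Diamond--Jarvis conjecture together with the assumed equivalence of algebraic and geometric modularity for regular weights. The global tools are multiplication by partial Hasse invariants and by theta-type (partial $\Theta$) operators on mod $p$ Hilbert modular forms. Let $J=\{\tau : k_\tau=1\}$ be the set of embeddings where the weight is partial weight one.

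For the direction ``modular $\Rightarrow$ crystalline lifts'', let $f$ be a geometric form of weight $(k,0)$ with $\rho_f\simeq\rho$. Multiply $f$ by the partial Hasse invariants $\Ha_{\tau}$ for $\tau\in J$. Each $\Ha_\tau$ has weight $p$ at $\Frob^{-1}\circ\tau$ and $-1$ at $\tau$, so we need the product to land in a regular weight $(k',0)$ with $k'_\tau\ge 2$ for all $\tau$. If that fails, we use the shifted weight $(k+2\cdot 1_J,\,-1_J)$ obtained via a suitable twist and theta operator, so that the Galois representation changes only by a known character. The smallness conditions on $k$ in Theorem \ref{locglobviceversa} should guarantee that this regular weight lies in the range where the Buzzard--Diamond--Jarvis Serre weights apply. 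The assumed equivalence then gives algebraic modularity of $\rho$ of the corresponding regular Serre weight $\sigma$. BDJ then says that each $\rho|_{G_{F_v}}$ has a crystalline lift with Hodge--Tate weights determined by $\sigma$. We then apply the author's local result, assumed available earlier in the paper. That result says that for such a small irregular weight, $\rho|_{G_{F_v}}$ has a crystalline lift of weight $(k_\tau,0)_{\tau\in\Sigma_v}$ if and only if it has crystalline lifts of the associated regular weights, all those obtained by raising $k_\tau$ for $\tau\in J_v$ in one of the two natural ways. Translating back gives the required lift.

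For the converse, start from crystalline lifts of weight $(k_\tau,0)$ at every $v\mid p$. The local result, in the ``irregular $\Rightarrow$ regular'' direction, produces crystalline lifts for the associated regular weights. BDJ, combined with the assumption that $\rho$ is modular, gives algebraic modularity in those regular Serre weights, and hence geometric modularity: there are forms $g$ of weights $(k+(p-1)\text{-shifts},\,\cdot)$ with $\rho_g\simeq\rho$. The main obstacle is descending back to weight $(k,0)$, that is, showing $g$ is divisible by the partial Hasse invariants $\Ha_\tau$ for $\tau\in J$. My first approach is the Diamond--Sasaki style criterion: a form lies in the image of multiplication by $\Ha_\tau$ if and only if it is killed by the partial theta operator $\Theta_\tau$, possibly after adjusting by the other regular weight. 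Concretely, I would use both regular weights supplied by the local result simultaneously. Modularity in the two weights related by $\Theta_\tau$, together with a Hecke eigenspace argument (the $\rho$-isotypic parts, with $\rho$ irreducible so we can localise at the maximal ideal), should force the $\rho$-part in the larger weight to come from $\Ha_\tau$ times weight $(k,0)$. I would do this one $\tau\in J$ at a time, inducting on $|J|$. The smallness hypotheses should ensure that the intermediate weights remain in the minimal cone and that the theta cycles do not degenerate. The hardest step is checking the exactness of the sequence
\[
0\to M_{(k,0)}\xrightarrow{\Ha_\tau} M_{k'}\xrightarrow{\Theta_\tau} M_{k''}
\]
after localisation at $\rho$, in the relevant weights.
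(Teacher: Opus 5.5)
\textbf{There is a genuine gap in each direction.}

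\textbf{Modularity to crystalline lifts.} You produce only one regular weight. But \cite[Theorem 1.2]{hwcrys} gives a lift of weight $(k_\tau,0)_{\tau\in\Sigma_v}$ only from lifts in \emph{all} of the regular weights $(k',0)$ and $(k^\mu,-e_\mu)$, $\mu\in\tilde M$. A lift in a single Hasse-shifted weight cannot suffice, just as for $F=\Q$ modularity in weight $p$ does not imply weight one.

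Your construction of that one weight is also wrong. Since $\Ha_\tau$ has weight $pe_{\Frob^{-1}\circ\tau}-e_\tau$, multiplying by $\Ha_\tau$ with $k_\tau=1$ sends that entry to $0$ unless $\Ha_{\Frob\circ\tau}$ is also used. Because $J\cap\Sigma_v$ is a proper subset of $\Sigma_v$, some $\tau\in J$ has $\Frob\circ\tau\notin J$, so the product over $J$ is never regular. Moreover, no twist plus theta operator shifts the weight by $(2\cdot 1_J,-1_J)$, since $\Theta_\tau$ shifts by $(e_\tau+pe_{\Frob^{-1}\circ\tau},-e_\tau)$.

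What you need is exactly the forward half of Theorem~\ref{mainthm}. With $M$ as in Section~\ref{algweights}, take $f\prod_{\tau\in M}\Ha_\tau$ and $\Theta_\mu\bigl(f\prod_{\tau\in M\setminus\{\mu\}}\Ha_\tau\bigr)$ for each $\mu\in\tilde M$. These all give rise to $\rho$ by Theorem~\ref{DSmain}(4),(5), and all have entries at most $p+1$. So \cite[Theorem A]{GLS15} applies to each of them before you invoke \cite[Theorem 1.2]{hwcrys}.

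\textbf{Crystalline lifts to modularity.} You have the right pair of weights in mind, but the descent mechanism fails. The sequence $0\to M_{k,0}\xrightarrow{\Ha_\tau}M_{k'}\xrightarrow{\Theta_\tau}M_{k''}$ is not even a complex. Comparing $q$-expansions, $\Theta_\tau$ commutes with multiplication by Hasse invariants, so $\Theta_\tau(\Ha_\tau g)=\Ha_\tau\Theta_\tau(g)$. By Theorem~\ref{DSmain}(2) there are eigenforms $g$ giving rise to $\rho$ with $\Theta_\tau(g)\neq 0$, so localising at $\rho$ does not rescue exactness. Localisation also does not isolate a single form: the $\rho$-part contains oldforms and may carry several $T_v$-eigenvalues for $v\mid p$.

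The paper's argument runs as follows.
\begin{enumerate}
\item Pass to stabilised eigenforms $f'$ of weight $(k',0)$ and $f^\mu$ of weight $(k^\mu,-e_\mu)$ at a common level, via \cite[Lemma 10.6.2]{DS}.
\item Observe that $\Theta_\mu(f')$ and $\Ha_\mu f^\mu$ have the same weight and are both strongly stabilised. For $\Ha_\mu f^\mu$ this uses \cite[Corollary 10.7.2]{DS} and the fact that $\rho|_{G_{F_v}}$ is not of a specific reducible shape. That is where hypothesis (4) of Theorem~\ref{locglobviceversa} and Lemma~\ref{impliedshape} (a congruence argument via \cite[Corollary 7.11]{GLS14}) enter.
\item Lemma~\ref{stronglystablemma} then forces $\Theta_\mu(f')=\Ha_\mu f^\mu$.
\item Theorem~\ref{thm822} now gives $\Ha_\mu\mid f'$ for each $\mu\in\tilde M$, using $p\nmid k'_\mu$; here $k'_\mu=k_\mu-1\in[2,p-1]$.
\item The remaining factors $\Ha_\tau$ with $\tau\in M\setminus\tilde M$ do not come from theta at all. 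They come from iterating Theorem~\ref{hasseiso}, as in Lemma~\ref{HasDiv}.
\end{enumerate}
Three ingredients missing from your plan are the heart of the proof: uniqueness of strongly stabilised eigenforms, the divisibility criterion of Theorem~\ref{thm822}, and the role of hypothesis (4).
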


The direction from geometric modularity to crystalline liftability can be viewed as a local-global compatibility statement, and this is also the subject of upcoming work of Kansal, Levin and Savitt \cite{kls}. We remark that their methods are more geometric than the ones present in this paper and allow for a larger weight range. For the other direction, the result allows one to obtain the \emph{existence} of a partial weight one Hilbert modular form whose Galois representation is equivalent to $\rho$. Under the assumptions listed the theorem then proves part of the conjecture of Diamond--Sasaki alluded to above. This extends work of Diamond and Sasaki in the case where $F$ is quadratic and $p$ is inert (\cite[Theorem 11.4.1 and Theorem 11.4.3]{DS}). Regarding the assumptions in our result, recall from above that the Buzzard--Diamond--Jarvis conjecture is known under some hypotheses and that there is significant recent progress on the relation between algebraic and geometric modularity as we will discuss in Section \ref{mainresultsection}. 

Let us next discuss our method of proof. This consists of translating results from regular to irregular weights.  In this paper we do this globally, relating modularity of regular weights to geometric modularity of irregular weights and vice versa, generalising a result in the inert quadratic case by Diamond and Sasaki \cite[Lemma 11.3.1]{DS}. This is the first theorem in this paper (Theorem \ref{mainthm}). The main work in generalising their result lies in finding explicit formulae for the weights. One needs to, given any irregular weight, find a set of corresponding regular weights whose modularity will imply modularity of the irregular weight (under some technical conditions) which we do in Section \ref{weightcombinatorics}. 

The proof uses methods introduced in \cite{DS} involving partial Hasse invariants and partial Theta operators. These weight combinatorics inspired the local $p$-adic Hodge theory analogue of this, proven in \cite[Theorem 1.2]{hwcrys}. The local version means that one can describe crystalline liftability of certain irregular weights in terms of the crystalline liftability of multiple regular weights. 
\par
The proof of Theorem \ref{thmintro} is then essentially a series of equivalences: above each place $v|p$, crystalline liftability of the irregular weight is equivalent to crystalline liftability of the regular weights. This existence of such crystalline lifts above each place is, for our weights, equivalent to algebraic modularity of the same regular weights due to work of Gee, Liu and Savitt \cite{GLS15}. By our assumptions this is equivalent to geometric modularity of the same regular weights. Finally this is equivalent to geometric modularity of the irregular weight due to Theorem \ref{mainthm}. As we shall see, in order to prove these results we need to impose some assumptions on $\rho$ and on our weights, which come from Theorem \ref{mainthm} in this paper as well as conditions appearing in the local version of this result in \cite{hwcrys}.

\subsection*{Outline of the paper}\label{ss:outline} We first introduce some notation and terminology. In the next section we briefly discuss mod $p$ Hilbert modular forms, weight shifting operators and geometric modularity. In the third section we construct the regular weights that will feature in our weight transferring results and we finish this section by proving the theorem relating geometric modularity of irregular weights to that of regular weights. In the fourth and final section we state and prove the main theorem of this paper.

\subsection*{Notation}\label{ss:notation}
We let $p$ be an odd rational prime. Fix algebraic closures $\Qpb$ and $\overline{\Q}$ of $\Qp$ and $\Q$ respectively, and fix embeddings of $\overline{\Q}$ into $\Qpb$ and $\C$. Let $F$ be a totally real field in which $p$ is unramified.  We let $\mathcal{O}_F$ be the ring of integers of $F$ and $\mathcal{O}_{F,p}=\mathcal{O}_F \otimes \Z_p$. We let $\hat{\mathcal{O}}_F$ denote the profinite completion of $\mathcal{O}_F$. Let $\Sigma$ be the set of embeddings $F \to \overline{\Q}$.

We let $L/\Q_p$ be a sufficiently large finite extension of $\Qp$ containing the image of every embedding in $\Sigma$. Let $E$ be the residue field. We identify $\Sigma$ with the set of embeddings of $F$ into $L$ and with the set of embeddings of $\mathcal{O}_F/p$ into $\Fpb$. We let $\Frob$ be the absolute Frobenius of $\Fpb$. For $v \mid p$ we denote $\Sigma_v$ for the set of embeddings of $\mathcal{O}_F/v$ into $\Fpb$. Note that Frobenius has a cyclic action on $\Sigma_v$. For any $\tau_0 \in \Sigma_v$ we let $\tau_i=\tau_{i+1}^p$ so that $\tau_i= \Frob \circ \tau_{i+1}$.  We often identify $\Sigma$ with $\prod_{v \mid p} \Sigma_v$. For any $\tau \in \Sigma$, we let $e_{\tau}$ denote the canonical basis element of $\Z^{\Sigma}$ associated to $\tau$.

Let $K$ be any unramified extension of $\Q_p$ of degree $f$ with residue field $\F$. Write $\Sigma_{K}$ for the embeddings of $K$ into $\Qpb$. We identify $\Sigma_{K}$ with the set of embeddings of $\F \to \Fpb$. For any $\tau_0 \in \Sigma_{K}$ we let $\tau_i=\tau_{i+1}^p$ so that $\Sigma_{K}=\{ \tau_i \, \vert \, i \in \{ 0, \dots, f-1\} \}$. We write $\omega_{\tau}$ for the fundamental character corresponding to $\tau \in \Sigma_K$.

\subsubsection*{$p$-adic Hodge theory}

If $W$ is a de Rham representation of $G_K$ over $\Qpb$ and $\kappa$ is an embedding $K \hookrightarrow \Qpb$ then the multiset $\HT_{\kappa}(W)$ of Hodge--Tate weights of $W$ with respect to $\kappa$ is defined to contain the integer $i$ with multiplicity 
\[
\dim_{\Qpb}(W \otimes_{\kappa,K} \hat{\overline{K}}(i))^{G_K},
\]
with $\hat{\overline{K}}$ the completion of $\overline{K}$. We wish to emphasise that with this convention the $p$-adic cyclotomic character $\chi_{\text{cyc}}$ has Hodge--Tate weight $-1$ (as in \cite{DS}). 

The following is taken from \cite{DS}.

\begin{definition}
We say a representation $\rho: G_K \to \GL_2(\Fpb)$ has a crystalline lift with weights $(k,l) \in \Z_{\geq 1}^{\Sigma_{K}} \times \Z^{\Sigma_{K}}$ if there exists a continuous representation $\tilde{\rho}: G_K \to \GL_2(\mathcal{O}_L)$ such that $\tilde{\rho} \otimes_{\mathcal{O}_L} \F_L \cong \rho$ and $\tilde{\rho} \otimes_{\mathcal{O}_L} L$ is crystalline with Hodge--Tate weights $\{k_{\kappa}+l_{\kappa}-1,l_{\kappa}\}_{\kappa}$ where $\kappa \in \Hom(K,\Qpb)$.
\end{definition}

\section{Mod $p$ Hilbert modular forms, weight shifting operators and geometric modularity}
A crucial ingredient in all that follows is the ability to attach Galois representations to mod $p$ Hilbert modular forms. In this section we introduce the theory of geometric mod $p$ Hilbert modular forms and their Galois representations following Diamond and Sasaki \cite{DS}. 
We start by studying mod $p$ Hilbert modular forms and defining geometric modularity, spending some time on eigenforms before moving on to weight shifting operators. Finally we summarise some key results concerning both of these to refer to later. We will provide references for the sake of brevity.

\subsection{Mod $p$ Hilbert modular forms and their Galois representations}
\label{modforms}
Building on work of Andreatta and Goren \cite{andreattagoren}, Diamond and Sasaki define mod $p$ geometric Hilbert modular forms as sections of certain automorphic line bundles on Hilbert modular varieties. The weights of such forms are tuples $(k,l) \in \Z^\Sigma \times \Z^\Sigma$. If the weights are paritious, then in characteristic zero these sections are classical Hilbert modular forms. Diamond and Sasaki show that in characteristic $p$, the geometric Hilbert modular form can also be defined for non-paritious weights. 

For any weight $(k,l) \in \Z^\Sigma \times \Z^\Sigma$, we shall write $M_{k,l}(U;E)$ for the space consisting of all mod $p$ Hilbert modular forms $f$ of weight $(k,l)$ with coefficients in $E$, and of level $U$ (as in \cite[Def 3.2.2]{DS}). The level $U$ is an open compact subgroup of $\GL_2(\hat{\mathcal{O}}_F)$ containing $\GL_2(\mathcal{O}_{F,p})$, and we require $U$ to be sufficiently small (see \cite[Caveat 2.4.2]{DS}) and $p$-neat (see \cite[Def 3.2.3]{DS}). These conditions ensure that one can indeed define the appropriate line bundles for all $(k,l)$.

We introduce notation for particular subgroups that satisfy these conditions. If $\mathfrak{n}$ is an ideal of $\mathcal{O}_F$ we define $U_1(\mathfrak{n})$ as the open compact subgroup of $\GL_2(\hat{\mathcal{O}}_F)$ given by
\[
U_1(\mathfrak{n})=\left\{ \mtwo{a}{b}{c}{d} \in \GL_2(\hat{\mathcal{O}}_F) \, \vert \, c, d-1 \in \mathfrak{n} \hat{\mathcal{O}}_F \right\}.
\]

Let $v$ be a prime of $F$ such that $v \nmid p$. Write $S_v$ and $T_v$ for the Hecke operators on $M_{k,l}(U;E)$, defined as double coset operators (\cite[Section 4.3]{DS}). We will also need the Hecke operators $T_v$ for $v \mid p$ (when $l_{\tau}=0$ and $k_{\tau} \geq 2$ for all $\tau \in \Sigma$), whose construction is given in \cite[Section 9.7]{DS}. On the space $M_{k,0}(U_1(\mathfrak{n}),E)$ these commute with the $T_v$ for $v \nmid p$ and with each other, as well as with the $S_v$ for $v \nmid p \mathfrak{n}$ (assuming all $k_{\tau} \geq 2$).

We can now discuss Galois representations. Diamond and Sasaki attach representations to mod $p$ Hilbert eigenforms of \emph{arbitrary} weights, extending results by Goldring and Koskivirta \cite{goldringkoskivirta} and Emerton, Reduzzi and Xiao \cite{emertonreduzzixiao} for paritious weights.

Next let $Q$ be a finite set of primes containing all $v \mid p$ and all $v$ such that $\GL_2(\mathcal{O}_{F,v}) \not \subset U$. Suppose $f \in M_{k,l}(U;E)$ is an eigenform for $T_v$ and $S_v$ for all $v \not \in Q$, then we write $\rho_f$ for the Galois representation attached to $f$ as in \cite[Theorem 6.1.1]{DS}.

\begin{definition}
An irreducible, continuous and totally odd Galois representation $\rho: G_F \to \GL_2(\Fpb)$ is \emph{geometrically modular of weight $(k,l)$} if $\rho$ is equivalent to the extension of scalars of $\rho_f$ for some eigenform $f \in M_{k,l}(U;E)$ as above. 
\end{definition}

This means $\rho$ is geometrically modular of weight $(k,l)$ if there is a non-zero element $f \in M_{k,l}(U;E)$ for some $U \supset \GL_2(\mathcal{O}_{F,p})$ and $E \subset \Fpb$ such that
\[
T_v f=\tr(\rho(\Frob_v))f \quad \text{ and } \text{Nm}_{F/\Q}(v) S_v f = \det(\rho(\Frob_v))f,
\]
for all but finitely many primes $v$.

\subsubsection{Properties of Hilbert modular eigenforms}
In the following we will need to know more about the properties of the eigenforms that $\rho$ arises from. The aim is to define strongly stabilised eigenforms, which will play a crucial role in the proof of the main result in the next section. In order to do this, we will define twists of Hilbert modular forms by certain characters. One can use these characters to move between forms of weight $(k,l)$ and forms of weight $(k,l+l')$ for any $l'$ (in particular also $-l$). Details of what follows below can be found in \cite[Section 10]{DS}. In this subsection we assume $(k,l)$ is regular.

\subsubsection*{Twisted eigenforms}
Let $l' \in \Z^{\Sigma}$ and suppose $\mathfrak{m}, \mathfrak{n}$ are ideals in $\mathcal{O}_F$ such that $\mathfrak{m} \mid \mathfrak{n}$ and $\mathfrak{n}$ is prime to $p$. We let $V_{\mathfrak{m}} \subset \hat{\mathcal{O}}^{\times}_F$ denote the kernel of the natural projection to $(\mathcal{O}_F/\mathfrak{m})^{\times}$. We say a character
\[
\xi: \{ a \in (\mathbb{A}_F^{\infty})^{\times} \mid a_p \in \mathcal{O}_{F,p}^{\times}/V_{\mathfrak{m}}\} \to E^{\times},
\]
is a character of weight $l' \in \Z^{\Sigma}$ if $\xi(\alpha)=\overline{\alpha}^{l'}$ for all $\alpha \in F_{+}^{\times} \cap \mathcal{O}_{F,p}^{\times}$.

For any $f \in M_{k,l}(U_1(\mathfrak{n});E)$ and any character $\xi$ of weight $l'$ and conductor $\mathfrak{m}$, we write $f_{\xi} \in M_{k,l+l'}(U_1(\mathfrak{n}\mathfrak{m}^2);E)$ for the form defined in \cite[Section 10.3]{DS}. When this is non-zero and $f$ is an eigenform, then so is $f_{\xi}$. In this case we find that $\rho_{f_\xi} \cong \rho_{\xi'} \otimes \rho_f$, where $\xi': \mathbb{A}_F^{\times}/  F^{\times}F^{\times}_{\infty,+}V_{\mathfrak{m}p} \to E^{\times}$ is defined by:
\[
\xi'(\alpha z a)=\xi(a) \overline{a}_p^{-l'},
\]
where $\alpha \in F^{\times}, z \in F^{\times}_{\infty,+}, a \in (\mathbb{A}_F^{\infty})^{\times}$ and $a_p \in \mathcal{O}_{F,p}^{\times}$ (\cite[Lemma 10.3]{DS}). Here $\rho_{\xi'}: G_F \to E^{\times}$ is the character corresponding to $\xi'$ by class field theory.

\subsubsection*{Normalised eigenforms}
An eigenform $f \in M_{k,l}(U_1(\mathfrak{n});E)$ for Hecke operators $T_v$ for all $v \nmid p$ and $S_v$ for all $v \nmid \mathfrak{n}p$ is normalised if for all characters $\xi$ of weight $-l$ and conductor $\mathfrak{m}$ coprime to $p$, with values in extensions $E'$ of $E$, the twists $f_{\xi}$ are eigenforms for $T_v$ for all $v \mid p$. Finally, the coefficient of $q$ in the $q$-expansion should be $1$ (see \cite[Definition 10.5.1]{DS}).

If $f$ is a normalised eigenform in $M_{k,l}(U_1(\mathfrak{n}),E)$ and $\xi$ is as above then $f_{\xi}$ is a normalised eigenform in $M_{k,l+l'}(U_1(\mathfrak{n}\mathfrak{m}^2);E)$.

\subsubsection*{Stabilised eigenform}
A normalised eigenform $f \in M_{k,l}(U_1(\mathfrak{n});E)$ is \emph{stabilised} if $T_vf=0$ for all $v \mid \mathfrak{n}$. A stabilised eigenform is \emph{strongly stabilised} if and only if $T_v f_{\xi}=0$ for all $v \mid p$ and characters $\xi$ of weight $-l$.

The key lemma in the proof of our first result concerning these forms is the following uniqueness statement:
\begin{lemma}[{\cite[Lemma 10.6.5]{DS}}] 
\label{stronglystablemma}
There is at most one strongly stabilised eigenform $f \in M_{k,l}(U_1(\mathfrak{n});E)$ giving rise to $\rho$.
\end{lemma}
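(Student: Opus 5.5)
The uniqueness will come from the $q$-expansion principle: every Hecke eigenvalue of a strongly stabilised eigenform is determined by $\rho$, and those eigenvalues determine the form. So I would take two strongly stabilised eigenforms $f,g \in M_{k,l}(U_1(\mathfrak{n});E)$ with $\rho_f \cong \rho_g \cong \rho$, show they have identical $q$-expansion coefficients at every cusp, and conclude $f=g$ by the injectivity of the $q$-expansion map on $M_{k,l}(U_1(\mathfrak{n});E)$ from \cite{DS}. The organising fact is that for a normalised eigenform the coefficients are multiplicative in the ideal and are given by Hecke eigenvalues. One reduces to the coefficient of $q^{\mathfrak{a}}$ for $\mathfrak{a}$ a prime power, and that coefficient is a polynomial in the $T_v$ and $S_v$ eigenvalues (for $v\nmid\mathfrak{n}p$), or equals a power of the $T_v$ eigenvalue (for $v\mid\mathfrak{n}p$). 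I would cite \cite[Section 9--10]{DS} for the relation between $T_v$ and the $q$-expansion, in which $T_v$ acts as the usual $U_v$-type operator on coefficients.

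The steps are as follows. First, for $v\nmid \mathfrak{n}p$ the eigenvalues of $T_v$ and $S_v$ on $f$ and $g$ are both $\tr\rho(\Frob_v)$ and $\text{Nm}(v)^{-1}\det\rho(\Frob_v)$. These agree for all but finitely many $v$ and hence, by Chebotarev and the fact that the eigenvalues at such $v$ are those of $\rho_f$, for all $v\nmid\mathfrak{n}p$. Second, for $v\mid\mathfrak{n}$ the stabilised condition gives $T_vf=T_vg=0$. Third, for $v\mid p$ we use the twists. Since $l$ may be nonzero, the operators $T_v$ at $p$ are only defined on forms of weight $(k,0)$. So I would pass to $f_\xi, g_\xi$ for a character $\xi$ of weight $-l$ and some conductor $\mathfrak{m}$ prime to $p$; such a $\xi$ exists, possibly after enlarging $E$. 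Strong stabilisation gives $T_vf_\xi=T_vg_\xi=0$ for all $v\mid p$. Moreover $\rho_{f_\xi}\cong\rho_{\xi'}\otimes\rho\cong\rho_{g_\xi}$ by \cite[Lemma 10.3]{DS}.

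By the first two steps applied to the twists (at level $\mathfrak{n}\mathfrak{m}^2$, with $T_v=0$ at $v\mid\mathfrak{m}$ handled similarly), $f_\xi$ and $g_\xi$ are normalised eigenforms with identical eigenvalues for all Hecke operators, including those at $p$. Hence they have identical $q$-expansions, so $f_\xi=g_\xi$. I would then recover $f$ from $f_\xi$ by twisting back by $\xi^{-1}$. Alternatively, the $q$-expansion coefficients of $f_\xi$ are the coefficients of $f$ multiplied by values of $\xi$, at ideals prime to $\mathfrak{m}p$, as in \cite[Section 10.3]{DS}. This, together with the vanishing of the coefficients at ideals divisible by primes dividing $\mathfrak{n}p$ (which follows from $T_v=0$ and normalisation), pins down all coefficients of $f$. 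Thus $f=g$.

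I expect the main obstacle to be the bookkeeping at $p$. The coefficients at ideals divisible by $v\mid p$ are not directly controlled by eigenvalues of $f$ itself, because $T_v$ only makes sense after twisting to weight $(k,0)$. One must also check that the twist operation is injective enough, or compatible with $q$-expansions at all relevant cusps, for $f_\xi=g_\xi$ to force $f=g$. To handle this I would first argue via the explicit effect of $\xi$ on $q$-expansion coefficients, using the non-vanishing of $\xi$ on ideals prime to $\mathfrak{m}$. I would vary $\mathfrak{m}$ if needed so that every coefficient of $f$ is reached.
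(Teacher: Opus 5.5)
The paper does not prove this lemma itself; it imports it from \cite[Lemma 10.6.5]{DS}. Your proposal is essentially the standard argument behind that lemma: the non-constant $q$-expansion coefficients of a strongly stabilised normalised eigenform are forced by $\rho$, and the $q$-expansion principle then gives $f=g$. The forcing comes from four facts: the Hecke recursion at $v\nmid\mathfrak{n}p$; vanishing at $v\mid\mathfrak{n}$ by stabilisation; vanishing at $v\mid p$, transported from the weight-$(k,0)$ twists $f_\xi$ with $T_vf_\xi=0$; and the coefficient of $q$ being $1$.

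Two small repairs are needed.

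First, twisting by $\xi^{-1}$ does not invert $f\mapsto f_\xi$, since it kills the coefficients at ideals not prime to $\mathfrak{m}$. So drop the detour through $f_\xi=g_\xi$. Keep instead your direct comparison of the coefficients of $f$ and $f_\xi$ at ideals prime to $\mathfrak{m}$, varying $\mathfrak{m}$; this works because conductors of weight $-l$ characters can be chosen to avoid any finite set of primes.

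Second, the Hecke recursion says nothing about constant terms, so you must show these vanish using the irreducibility of $\rho$. If $f-g$ were a nonzero form with only constant terms carrying the eigenvalues of $\rho$, it would force $\tr\rho(\Frob_v)=\chi_1(\Frob_v)+\chi_2(\Frob_v)$ for two characters $\chi_1,\chi_2$, which contradicts irreducibility.
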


\subsection{Partial Hasse invariants and partial $\Theta$ operators}
\label{partoperators}
We next introduce some weight shifting operators on the space of mod $p$ Hilbert modular forms. These give us ways to increase the weight of a Hilbert modular form, which will allow us to move from irregular to regular weights.

\subsubsection*{Partial Hasse invariants}
For any $\tau \in \Sigma$, we let $\Ha_{\tau}$ denote the partial Hasse invariant as in   \cite[Section 5.1]{DS}. These are mod $p$ Hilbert modular forms of weight $k_{\Ha_{\tau}}:= p e_{\Frob^{-1} \circ \tau} - e_{\tau}$, so the weights of such forms are not strictly positive. Multiplication by $\Ha_{\tau}$ gives a new mod $p$ Hilbert modular form $\Ha_{\tau}f$, in fact, we obtain an injective map
\[
M_{k-k_{\Ha_{\tau}},l}(U;E) \to M_{k,l}(U;E),
\]
commuting with $T_v$ and $S_v$ for all $v \nmid p$ such that $\GL_2(\mathcal{O}_{F,v}) \subset U$ (with our usual assumptions on $U$, but see Proposition 5.1.1 of \cite{DS}). Moreover we have the following:

\begin{theorem}[{\cite[Theorem 1.1]{DiamondKassaei}.}]
Multiplication by $\Ha_{\tau}$ induces an isomorphism if $pk_{\tau}< k_{\Frob^{-1} \circ \tau}$. 
\label{hasseiso}
\end{theorem}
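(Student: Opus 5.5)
This is Theorem 1.1 of Diamond--Kassaei, so the natural plan is to reproduce their geometric argument. Since multiplication by $\Ha_\tau$ is already known to be injective, it suffices to show that every $g \in M_{k,l}(U;E)$ is divisible by $\Ha_\tau$ when $pk_\tau < k_{\Frob^{-1}\circ\tau}$. Write $\sigma=\Frob^{-1}\circ\tau$ and $Y$ for the mod $p$ Hilbert modular variety of level $U$. Let $Z_\tau\subset Y$ be the vanishing locus of $\Ha_\tau$. It is a smooth divisor, so divisibility can be checked by a local computation along $Z_\tau$: we must show that $g$ vanishes identically on $Z_\tau$. If the claim is really that the order of vanishing is forced up, we must show that $g|_{Z_\tau}$, as a section of $\omega^{k}\otimes\delta^{l}$ restricted to $Z_\tau$, is zero.

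First I would use the geometry of the Goren--Oort stratum $Z_\tau$. By work of Goren--Oort, Helm and Tian--Xiao, $Z_\tau$ is a $\mathbb{P}^1$-bundle over a lower-dimensional quaternionic Shimura variety; equivalently one can use the partial Frobenius/Verschiebung description of the fibres. On each fibre $\mathbb{P}^1$, the restrictions of the line bundles $\omega_\tau$ and $\omega_\sigma$ have explicit degrees. I expect, after twisting by $\delta$ (which is trivial on fibres), that $\omega_\sigma$ restricts to $\mathcal{O}(-1)$ and $\omega_\tau$ restricts to $\mathcal{O}(p)$, while all other $\omega_{\tau'}$ are trivial on the fibres. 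The automorphic bundle of weight $k$ then has fibre degree $pk_\tau - k_\sigma$. This is consistent with the fact that $\Ha_\tau$, of weight $pe_\sigma-e_\tau$, vanishes on $Z_\tau$: its weight has fibre degree $-p-p<0$ in this normalisation. That check alerts me to a sign, and pinning down the correct sign convention is exactly the obstacle discussed below.

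Under the hypothesis $pk_\tau<k_\sigma$ the fibre degree is negative, so $g|_{Z_\tau}$ vanishes on every fibre and hence is zero. Therefore $g=\Ha_\tau g'$ with $g'$ a section of the weight $(k-k_{\Ha_\tau},l)$ bundle, which is surjectivity.

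The main obstacle is the precise computation of the degrees of $\omega_\tau$ and $\omega_\sigma$ restricted to the $\mathbb{P}^1$-fibres of $Z_\tau$. This requires the Dieudonné-module description of the universal $p$-divisible group along the stratum, and it is where the sign and the factor $p$ in the condition $pk_\tau<k_\sigma$ must come out exactly. A secondary issue is properness: if $Y$ is non-compact, one works on a toroidal compactification, where the Koecher principle and the non-meeting of $Z_\tau$ with the boundary for $[F:\Q]>1$ reduce everything to the interior. In the paper itself I would simply cite \cite{DiamondKassaei} and only invoke this sketch for orientation.
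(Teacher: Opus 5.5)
The paper gives no proof of its own here; it only cites Diamond--Kassaei. For $\tau$ with $\Frob\circ\tau\neq\tau$, your sketch is essentially their argument, and the fibre degrees you guessed are correct, so there is no sign problem. On $Z_\tau$ one has $\omega_\tau=\ker V_\tau=\operatorname{im}F\cong(\mathcal{H}_\sigma/\omega_\sigma)^{\otimes p}$, where $\mathcal{H}_\sigma$ is the $\sigma$-part of $H^1_{dR}$ of the universal abelian scheme. So $\omega_\sigma\cong\mathcal{O}(-1)$ on the fibres forces $\omega_\tau\cong\mathcal{O}(p)$. The fibre degree $-2p$ of the weight of $\Ha_\tau$ is simply the degree of the normal bundle $\mathcal{O}(Z_\tau)|_{Z_\tau}$, which should be negative. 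As a check, for $F$ real quadratic with $p$ inert, $\Ha_\sigma$ restricts to a section of $\mathcal{O}(p^2+1)$ on each supersingular $\mathbb{P}^1$, vanishing at its $p^2+1$ superspecial points.

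The genuine gap is the case $\Frob\circ\tau=\tau$, i.e.\ $\tau$ lies over a place $v\mid p$ of residue degree one. The statement allows this case, and the paper explicitly considers it (see the definition of $\tilde{k}$ for $\Theta_\tau$). Then $\sigma=\tau$, $\Ha_\tau$ has weight $(p-1)e_\tau$, the hypothesis reads $k_\tau<0$, and your picture breaks down in two ways.

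\begin{itemize}
\item $Z_\tau$ has no $\mathbb{P}^1$-fibres; for $F$ real quadratic with $p$ split it is a Shimura curve.
\item The same identity now gives $\omega_\tau^{\otimes(p+1)}\cong(\wedge^2\mathcal{H}_\tau)^{\otimes p}$ on $Z_\tau$. Hence $\omega_\tau|_{Z_\tau}$ is numerically trivial, while the other $\omega_{\tau'}$ can be ample there.
\end{itemize}

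So $g|_{Z_\tau}$ is a section of a bundle that generally has many sections, and no degree argument along $Z_\tau$ can force it to vanish. Compare $F=\Q$: there $Z$ is finite, and the vanishing of negative-weight forms is a global degree statement. Iterating the claimed isomorphism shows that in this case the theorem amounts to the global vanishing $M_{k,l}(U;E)=0$ whenever $k_\tau<0$, and this needs a separate argument.

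One possible route, using the standard smoothness and transversality properties of Goren--Oort strata, is as follows.
\begin{itemize}
\item Restrict instead to the one-dimensional stratum $\bigcap_{\tau'\neq\tau}Z_{\tau'}$. There $\omega_\tau$ has positive degree and the other $\omega_{\tau'}$ are torsion, so every section of weight with $k_\tau<0$ vanishes.
\item Climb back up the strata. Dividing by $\Ha_{\tau'}$ with $\tau'\neq\tau$ does not change $k_\tau$, so each restriction is divisible by arbitrarily high powers of the next partial Hasse invariant, hence is zero.
\end{itemize}

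As written, your proof covers only the $\tau$ with $\Frob\circ\tau\neq\tau$.
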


Moreover, Diamond and Kassaei deduce from this that every mod $p$ Hilbert modular form comes from multiplication by partial Hasse invariants from a form with weights in the following cone:
\[
\Xi_{\min} = \{ k \in \Z^{\Sigma} \, \vert \, pk_{\tau} \geq k_{\Frob^{-1} \circ \tau} \text{ for all } \tau \in \Sigma \},
\]
called the minimal weight cone or simply minimal cone (\cite[Corollary 1.2]{DiamondKassaei}).

\subsubsection*{Partial Theta operators}
For any $\tau \in \Sigma$, we let $\Theta_{\tau}$ denote the partial $\Theta$-operator. These are defined using the Hasse invariants and the Kodaira-Spencer isomorphism (\cite[Definition 8.2.1]{DS}). Let $(k,l) \in \Z^{\Sigma} \times \Z^{\Sigma}$ be any weight. Then we let $(\tta{k},\tta{l})$ be as follows: 
\begin{itemize}
\item if $\Frob \circ \tau=\tau$, then $\tta{k}_{\tau}=k_{\tau}+p+1$ and $\tta{k}_{\tau'}=k_{\tau'}$ if $\tau' \neq \tau$;
\item if $\Frob \circ \tau \neq \tau$, then $\tta{k}_{\tau}=k_{\tau}+1, \tta{k}_{\Frob^{-1} \circ \tau}=k_{\Frob^{-1} \circ \tau}+p$ and $\tta{k}_{\tau'}=k_{\tau'}$ if $\tau \not \in \{ \Frob^{-1} \circ \tau, \tau\}$;
\item $\tta{l}_{\tau}=l_{\tau}-1$, and $\tta{l}_{\tau'}=l_{\tau'}$ if $\tau' \neq \tau$.
\end{itemize}

The above represents the weight shift of the action of the $\Theta$-operator as follows: 

\begin{theorem}\cite[Theorem 8.2.2]{DS}
\label{thm822}
If $f \in M_{k,l}(U;E)$ and $\tau \in \Sigma$, then $\Theta_\tau(f) \in M_{\tta{k},\tta{l}}(U;E)$.  Moreover, $\Theta_\tau(f)$ is divisible by $\Ha_{\tau}$ if and only if either $f$ is divisible by $\Ha_{\tau}$ or $k_{\tau}$ is divisible by $p$.
\end{theorem}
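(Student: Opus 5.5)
The plan is to construct $\Theta_\tau$ explicitly on the Hilbert modular variety $Y_U$ over $E$, read off its weight from the construction, and then settle the divisibility statement by a local computation along the vanishing locus $Z_\tau$ of $\Ha_\tau$. Write $\omega_\tau$ and $\delta_\tau$ for the automorphic line bundles, so that $M_{k,l}(U;E)=H^0(Y_U,\bigotimes_\tau \omega_\tau^{k_\tau}\otimes\delta_\tau^{l_\tau})$. Let $\mathcal{H}_\tau$ be the $\tau$-component of $H^1_{dR}$ of the universal abelian variety. It carries the Hodge filtration $0\to\omega_\tau\to\mathcal{H}_\tau\to\omega_\tau^{-1}\otimes\delta_\tau\to0$ and the Gauss--Manin connection $\nabla$. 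The Kodaira--Spencer isomorphism identifies the $\tau$-part of $\Omega^1_{Y_U}$ with $\omega_\tau^{2}\otimes\delta_\tau^{-1}$.

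\textbf{Construction and weight.} Let $\nabla_\tau$ be $\nabla$ on $\bigotimes_\tau\omega_\tau^{k_\tau}\otimes\delta_\tau^{l_\tau}$ composed with projection to the $\tau$-part of $\Omega^1$. Applied to $f$, it lands in sections built from $\mathcal{H}_\tau$ rather than $\omega_\tau$, because the Hodge filtration is not horizontal. In characteristic $p$ I would remove this obstruction using the conjugate filtration, i.e.\ the image of Verschiebung/Frobenius. The map $\omega_\tau\to\mathcal{H}_\tau\to(\text{conjugate quotient})$ is multiplication by $\Ha_\tau$, up to Frobenius twists by $\omega_{\Frob^{-1}\circ\tau}^{p}$. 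So after multiplying by $\Ha_\tau$, projecting $\nabla_\tau f$ onto the conjugate splitting gives a genuine section. Bookkeeping of weights gives
\[
k+2e_\tau+k_{\Ha_\tau}=k+e_\tau+pe_{\Frob^{-1}\circ\tau},\qquad l-e_\tau ,
\]
where the $2e_\tau$ and $-e_\tau$ come from Kodaira--Spencer. When $\Frob\circ\tau=\tau$ the two contributions collapse to $k_\tau+p+1$. This is exactly $(\tta{k},\tta{l})$, which proves the first assertion.

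\textbf{Divisibility.} Next I would establish the derivation property
\[
\Theta_\tau(gh)=g\,\Theta_\tau(h)+h\,\Theta_\tau(g),
\]
and compute $\Theta_\tau$ modulo $\Ha_\tau$. For this, work in local coordinates at a point of $Z_\tau$, using Dieudonné or display theory together with the Goren--Oort description of $Z_\tau$ as a reduced, smooth divisor. Writing $f=g\,s^{k}$ for a local trivialising section $s$, the goal is
\[
\Theta_\tau(f)\equiv k_\tau\, f\cdot u \pmod{\Ha_\tau},
\]
with $u$ nowhere vanishing on $Z_\tau$. The point is that the $g$-derivative term is killed by the factor $\Ha_\tau$. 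What survives is the derivative of the trivialisation $s$, which involves the $\tau$-derivative of the Hasse invariant, and this is transversal to $Z_\tau$. Since $Z_\tau$ is reduced, divisibility by $\Ha_\tau$ is equivalent to vanishing on $Z_\tau$. Hence $\Ha_\tau\mid\Theta_\tau(f)$ if and only if $k_\tau f|_{Z_\tau}=0$. Because $Z_\tau$ is a reduced divisor whose components each meet the support of $f$ in a proper subset unless $\Ha_\tau\mid f$, this says $p\mid k_\tau$ or $\Ha_\tau\mid f$. As a sanity check, for $f=\Ha_\tau^m h$ the Leibniz rule gives $\Theta_\tau(f)=\Ha_\tau^m\Theta_\tau(h)+m\Ha_\tau^{m-1}h\,\Theta_\tau(\Ha_\tau)$, and the second term must then be divisible, which is consistent with the formula applied to $\Ha_\tau$ (weight $-1$ in $\tau$, but divisible by itself).

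The main obstacle is the local computation of $\Theta_\tau$ modulo $\Ha_\tau$: showing the residual term is exactly $k_\tau$ times a unit. This needs an explicit model of $\mathcal{H}_\tau$, its Frobenius, and $\nabla$ near non-ordinary points. I would first try to carry it out using Serre--Tate-type deformation coordinates on a chart around a generic point of each component of $Z_\tau$, where only the $\tau$ and $\Frob^{-1}\circ\tau$ directions matter.
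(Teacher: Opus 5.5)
Diamond--Sasaki's argument, which the paper simply cites, is the standard Katz-style one, and your outline reproduces it. You use Gauss--Manin followed by projection along the horizontal conjugate line $\ker V\subset\mathcal{H}_\tau$, which has a simple pole along $Z_\tau$ and is cleared by one factor of $\Ha_\tau$, and then Kodaira--Spencer; for divisibility you use $\Theta_\tau(f)\equiv k_\tau f u \pmod{\Ha_\tau}$, with $u$ coming from the $\tau$-component of the derivative of $\Ha_\tau$. So the proposal is correct.

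Only minor tidying is needed:
\begin{itemize}
\item For $k_\tau<0$, define the polar connection on $\omega_\tau$ and extend it to tensor powers, rather than applying $\nabla$ to $\omega_\tau^{k_\tau}$ directly.
\item The final step needs no discussion of components: $k_\tau$ is a scalar in $E$, and $f|_{Z_\tau}=0$ is equivalent to $\Ha_\tau\mid f$.
\item No Serre--Tate-type chart is needed for $u$, which is nowhere zero on $Z_\tau$ by a first-order deformation computation at each point of $Z_\tau$. There $\omega_\tau=\ker V$, so deforming along $\phi\in\Hom(\omega_\tau,\mathcal{H}_\tau/\omega_\tau)$ changes $V|_{\omega_\tau}$ by $V\circ\phi$, which is nonzero for $\phi\neq 0$.
\end{itemize}
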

We will also use the second part of the above theorem to prove divisibility by Hasse invariants.

\begin{remark} 
If $f$ is a normalised (resp. stabilised, strongly stabilised) eigenform, then this is also true for both $\Ha_{\tau}f$ and $\Theta_{\tau}(f)$ for any $\tau$ (assuming $k_{\tau} \geq 3$ if $\tau \neq \Frob \circ \tau$ in the case of $\Ha_{\tau}f$). This is \cite[Remark 10.6.6]{DS}.
\end{remark}

\subsection{Geometric modularity}
Finally we collect some results on geometric modularity we will need in the next section, all of these are results from \cite{DS}.

\begin{theorem}
\label{DSmain}
Suppose $\rho: G_F \to \GL_2(\Fpb)$ is irreducible and geometrically modular of weight $(k,l)$.
\begin{enumerate}
\item Then $\rho$ arises from an eigenform $f$ of weight $(k,l)$ and level $U_1(\mathfrak{n})$ for some $\mathfrak{n}$ prime to $p$.
\item Moreover, we can take $f$ such that $\Theta_{\tau}(f) \neq 0$ for all $\tau \in \Sigma$.
\item If $k_{\tau} \geq 2$ for all $\tau$, then $f$ can moreover be taken to be a normalised eigenform of level $U_1(\mathfrak{n})$ for some $\mathfrak{n}$ prime to $p$.
\item Moreover, $\rho$ is also geometrically modular of weight $(k+k_{\Ha_{\tau}},l)$ for any $\tau \in \Sigma$.
\item Moreover, $\rho$ is also geometrically modular of weight $(\tta{k},\tta{l})$ as in Section \ref{partoperators} for any $\tau \in \Sigma$.
\end{enumerate}
\end{theorem}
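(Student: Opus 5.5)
This theorem collects results from \cite{DS}, so the plan is to deduce each part from the machinery recalled in this section, in the order stated.

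For (1), start from an eigenform $f\in M_{k,l}(U;E)$ with $\rho\cong\rho_f$ and $U\supset\GL_2(\mathcal{O}_{F,p})$. Shrinking the level does not change the eigen-system away from a finite set of primes. Hence, choosing $\mathfrak{n}$ prime to $p$ with $U_1(\mathfrak{n})\subset U$ and with $U_1(\mathfrak{n})$ sufficiently small and $p$-neat, we may pull $f$ back to a nonzero form of level $U_1(\mathfrak{n})$. The Hecke operators $T_v,S_v$ for $v\notin Q$ form a commuting family acting on the finite-dimensional space $M_{k,l}(U_1(\mathfrak{n});E)$. So the generalised eigenspace for the eigen-system of $\rho$ is nonzero, and after enlarging $E$ it contains a genuine simultaneous eigenvector with the same eigenvalues.

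For (2), suppose some $\Theta_\tau(f)=0$. The kernel of $\Theta_\tau$ is described in \cite{DS}: such forms arise from lower weight via Frobenius/$p$-th power type constructions, essentially $\Ha$-divisibility combined with Theorem \ref{thm822}. I would then replace $f$ by a form whose Galois representation is a twist that returns to the same weight, using the twisting by characters of weight $l'$ from Section 10. Alternatively, one can apply a suitable $\Theta$-operator to a companion form and divide by Hasse invariants using Theorem \ref{hasseiso}. This is the delicate step and I expect it to be the main obstacle. My first attempt would use that $\Theta_\tau$ acts on $q$-expansions by multiplying the coefficient of $q^m$ by $\overline{\tau(m)}$, which is the $\tau$-component of $m$ mod $p$. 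Choosing $f$ with a nonzero $q$-expansion coefficient at some $m$ that is a unit at all $v\mid p$ forces $\Theta_\tau(f)\neq0$. The coefficient $a_1$ of a suitably normalised eigenform is nonzero for irreducible $\rho$ (by $q$-expansion principle), so it suffices to make sure the chosen eigenform has $a_1\ne 0$.

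For (3), when $k$ is regular, the Hecke operators $T_v$ for $v\mid p$ from \cite[9.7]{DS} are available and commute with the others. Twist by a character $\xi$ of weight $-l$ to land in weight $(k,0)$. There, take a simultaneous eigenvector for all $T_v$ including $v\mid p$, again by finite dimensionality. Then twist back by $\xi^{-1}$, and use the $q$-expansion to normalise $a_1=1$, possibly after raising $\mathfrak{n}$ by the conductor.

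For (4) and (5), these follow by multiplying by $\Ha_\tau$ and applying $\Theta_\tau$. Both commute with $T_v,S_v$ for $v\nmid p$; for $\Theta_\tau$ this holds up to the twist by the cyclotomic-type character of weight $-e_\tau$, which accounts for $\tilde l_\tau=l_\tau-1$. Injectivity of multiplication by $\Ha_\tau$ gives a nonzero eigenform of weight $(k+k_{\Ha_\tau},l)$. For $\Theta_\tau$, part (2) ensures $\Theta_\tau(f)\ne0$, yielding an eigenform of weight $(\tilde k,\tilde l)$ with Galois representation $\rho$ up to the appropriate normalisation of the twist.
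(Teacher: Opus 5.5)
\textbf{Main gap: part (2).} The $q$-expansion principle only guarantees that a nonzero form has \emph{some} nonzero coefficient. It says nothing about $a_1$, or about any coefficient at an index $m$ prime to $p$. Your eigenform from (1) is an eigenform only for $T_v,S_v$ with $v\notin Q$, and $Q$ contains every $v\mid p$. So nothing ties the coefficients at $m\in\mathfrak p$ to $a_1$. By the very $q$-expansion formula you quote, the forms whose expansions (at every cusp) are supported on $m\in\mathfrak p$ are exactly those killed by $\Theta_\tau$ for $\tau\in\Sigma_{\mathfrak p}$. Such eigenforms do occur with irreducible $\rho$: classically $g(q^p)$ has the same eigenvalues away from $p$ as $g$, and $a_1=0$. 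So ``a suitably normalised eigenform has $a_1\neq 0$'' is the entire content of (2), not a consequence of the $q$-expansion principle. The only normalisation available to you is (3), which needs $k_\tau\geq 2$ for all $\tau$, since that is the only setting in which the $T_v$ for $v\mid p$ are constructed. In regular weight your argument does work if you prove (3) first: a normalised eigenform has $a_1=1$, and $\Theta_\tau$ multiplies it by $\overline{\tau(1)}=1$. But (2) is needed in arbitrary weight, and the paper uses it, through (5), in irregular weight in the forward direction of Theorem~\ref{mainthm}. What is missing is a genuine argument at $p$. One possible route:
\begin{enumerate}
\item show that an eigenform killed by $\Theta_\tau$ ($\tau\in\Sigma_{\mathfrak p}$) is the image of an eigenform $g$ of smaller weight under a partial Frobenius operator at $\mathfrak p$ that commutes with the Hecke operators away from $p$;
\item replace $f$ by $g$ times a suitable product of partial Hasse invariants, which has the same weight and eigenvalues as $f$ but the same $q$-expansion as $g$;
\item induct on the weight.
\end{enumerate}
The paper does not reprove this; it cites \cite[Lemma 10.4.1]{DS}.

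\textbf{A false step in part (1).} In general no $\mathfrak n$ satisfies $U_1(\mathfrak n)\subset U$. Indeed $U_1(\mathfrak n)$ contains every $\mtwosmall{a}{b}{0}{1}$ with $a\in\hat{\mathcal O}_F^\times$ and $b\in\hat{\mathcal O}_F$, so it is never contained in a principal congruence subgroup $U(\mathfrak N)$ with $\mathfrak N\neq\mathcal O_F$. You first have to translate $f$ by a suitable diagonal element (the adelic analogue of $f(z)\mapsto f(Nz)$), which deals with the upper-right entry. You then still need a vector in the $\rho$-eigenspace fixed by the matrices $\mtwosmall{u}{0}{0}{1}$, $u\in\hat{\mathcal O}_F^\times$. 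That is a newform-type statement about the local components at the places dividing $\mathfrak N$, and it is where the irreducibility of $\rho$ has to enter. Your argument never uses irreducibility, which is a symptom of this missing step. The paper takes (1) from \cite[Lemma 10.2.1]{DS}.

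Finally, in (5) there is no residual twist to track. In the normalisation used here $\Theta_\tau$ commutes with the Hecke operators, so $\rho_{\Theta_\tau(f)}\cong\rho_f$ exactly.
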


\begin{proof}
The first point is \cite[Lemma 10.2.1]{DS}, the second is \cite[Lemma 10.4.1]{DS} and the third is \cite[Proposition 10.5.2]{DS}.
The fourth follows from the commutativity of $\Ha_{\tau}$ with the Hecke operators. This is \cite[Theorem 10.4.2]{DS}, which follows from $(2)$ and commutativity of $\Theta_{\tau}$ with the Hecke operators. \qedhere
\end{proof}

\section{Partial weight one modularity}
\label{weightcombinatorics}
In this section we first prepare to state the theorem connecting modularity of regular weights to irregular weights and vice versa. Using the weight shifting operators introduced in the previous sections, we associate Hilbert modular forms of irregular weights to Hilbert modular forms with regular weights. We will then use the results in the previous section to prove our theorem.

\subsection{The regular weights}
\label{algweights}
Let $f$ be a partial weight one Hilbert modular form. We associate multiple Hilbert modular forms to it, all with regular weights. We obtain one form from $f$ through multiplication by Hasse invariants, and the other ones from multiplication by Hasse invariants and the action of a partial Theta operator. The idea behind this comes from Theorem \ref{DSmain}: if a Galois representation $\rho$ is modular with respect to the partial weight one weight form, it will also be modular with respect to the forms defined below.

Suppose $f$ is any non-zero mod $p$ Hilbert modular form with fixed irregular weight $(k,0)$ such that $k_{\tau} \geq 1$ for all $\tau \in \Sigma$. We first define some auxiliary subsets of $\Sigma$. Note that these depend on the choice of $k$, and not on $l$ or on the chosen $f$.

\begin{definition}
Let $k \in \Z_{\geq 1}^{\Sigma}$. We let $M$ be the subset of $\Sigma$ consisting of $\tau \in \Sigma$ such that
\[
 k_{\Frob^{-1} \circ \tau}=\dots =k_{\Frob^{-(s-1)} \circ \tau}=2 \text{ and } k_{\Frob^{-s} \circ \tau}=1,
\]
for some $s \geq 1$ (the first condition is vacuous if $s=1$).
\end{definition}

We need another subset of $\Sigma$, which is in turn a subset of $M$.
\begin{definition}
Let $k \in \Z_{\geq 1}^{\Sigma}$. Let $\tilde{M}$ be the set of $\tau \in \Sigma$ such that one of the following conditions holds:
\begin{itemize}
\item  $k_{\tau} \geq 3$ and $\tau \in M$,
\item $k_{\tau}=2, k_{\Frob^{-1} \circ \tau}=1, k_{\Frob^{-2} \circ \tau}=1$ or $2$ and if $k_{\Frob^{-2} \circ \tau}=2$ then $\Frob^{-2} \circ \tau \in M$.
\end{itemize}
\end{definition}

\begin{remark}
Note that if there is no case where $k_{\tau}=2$ and $k_{\Frob^{-1} \circ \tau}=1$, that $\tilde{M}$ is just the set $\tau \in \Sigma$ such that $k_{\tau} \neq 1, k_{\Frob^{-1} \circ \tau}=1$. Otherwise, there might also be $\tau \in \tilde{M}$ such that
\[
(k_{\tau},k_{\Frob^{-1} \circ \tau}, k_{\Frob^{-2} \circ \tau)}=(2,1,1) 
\]
or
\[
(k_{\tau},k_{\Frob^{-1} \circ \tau}, k_{\Frob^{-2} \circ \tau)}=(2,1,2) \text{ and } \Frob^{-2} \circ \tau \in M.
\]   
\end{remark}

\subsubsection*{The definition of $(k',l')$}
We let $k'$ be the weight of the Hilbert modular form obtained by multiplying $f$ by certain partial Hasse invariants. More precisely, we define $f'$ as the modular form: 
\begin{align}
\label{defkprime}
f \cdot \prod_{\tau \in M} \Ha_{\tau}. 
\end{align}
We denote $(k',l')$ for the weight of $f'$.  We have $l'=l=(0, \dots,0)$. Note again the values $(k',l')$ are independent on the choice of $f$.

\subsubsection*{The definition of $(k^{\mu},l^{\mu})$}
Next we want to obtain weights for each $\mu \in \tilde{M}$ as above. Again we do this by defining modular forms by acting on $f$. This time we use both partial Hasse invariants and partial Theta operators. 

Let $\mu \in \tilde{M}$, then we define $f^{\mu}$ to be the form given by
\begin{align}
\label{defkmu}
\Theta_\mu \left( f \cdot \prod_{\tau \in {M} \setminus \{\mu\}} \Ha_{\tau} \right).
\end{align}
We denote $(k^{\mu},l^{\mu})$ for the weight of $f^{\mu}$.  We have $l^{\mu}= - e_{\mu}$. In this case the values $(k^{\mu},l^{\mu})$ only depend on $k$ and the choice of $\mu$.

\begin{example}
Let $F$ be a cubic extension in which the prime $p$ is inert. Fix $\tau_0 \in \Sigma$ and consider the weight $(k,0) \in \Z_{\geq 1}^{\Sigma} \times \Z^{\Sigma}$ with $k=(k_{\tau_0},k_{\tau_1},k_{\tau_2})=(1,1,k_2)$ and $k_2 \geq 2$. 
We find that $M=\{\tau_0,\tau_2\}$ if $k_2 \geq 3$, and $M=\Sigma$ otherwise. We find $\tilde{M}=\{\tau_2\}$ in both cases.

By the above procedure we obtain weight tuples 
\begin{align*}
(k',l') = \begin{cases}
    ((p,p+1,k_2-1),(0,0,0)), & \text{if } k_2 \geq 3,  \\
    ((p,p,p+1),(0,0,0)), & \text{if } k_2=2, 
\end{cases}   
\end{align*}
and 
\begin{align*}  
(k^{\mu},l^{\mu}) = \begin{cases}
    ((p,p+1,k_2+1),(0,0,-1)), & \text{if } k_2 \geq 3,  \\
    ((p,p,p+3),(0,0,-1)), & \text{if } k_2=2. 
\end{cases}   
\end{align*}
\end{example}

Let us prove that indeed in this way we always obtain forms with regular weights.

\begin{lemma}
Let $f$ be a non-zero mod $p$ Hilbert modular form with irregular weight $(k,0)$ such that $k \in \Z_{\geq 1}^{\Sigma}$. Then the forms $f'$ and $f^{\mu}$ defined above have regular weight for each $\mu \in \tilde{M}$.
\end{lemma}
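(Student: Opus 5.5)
The statement is purely about weights, so the plan is to compute $k'$ and $k^{\mu}$ explicitly from the weight shifts of $\Ha_{\tau}$ and $\Theta_{\tau}$ recorded in Section \ref{partoperators}. We then check that $k'_{\sigma}\geq 2$ and $k^{\mu}_{\sigma}\geq 2$ for every $\sigma\in\Sigma$. The fact that $f$ is non-zero plays no role: multiplication by $\Ha_\tau$ is injective, and for $f^{\mu}$ only the weight given by Theorem \ref{thm822} matters.

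\textbf{Step 1: the formula for $k'$.} Multiplication by $\Ha_{\tau}$ adds $k_{\Ha_\tau}=pe_{\Frob^{-1}\circ\tau}-e_{\tau}$. Writing $\sigma=\Frob^{-1}\circ\tau$, we get for all $\sigma\in\Sigma$
\[
k'_{\sigma}=k_{\sigma}-\mathbf{1}_{\sigma\in M}+p\,\mathbf{1}_{\Frob\circ\sigma\in M}.
\]
This holds uniformly, including when $\Frob\circ\sigma=\sigma$: then $k_{\Ha_\sigma}=(p-1)e_\sigma$, and both indicators refer to the same condition.

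\textbf{Step 2: $k'$ is regular.} We split into three cases according to $k_{\sigma}$.
\begin{itemize}
\item If $k_{\sigma}=1$, then $\Frob\circ\sigma\in M$ by taking $s=1$ in the definition of $M$. Hence $k'_{\sigma}\geq 1-1+p=p\geq 3$.
\item If $k_{\sigma}\geq 3$, then $k'_{\sigma}\geq k_{\sigma}-1\geq 2$.
\item If $k_{\sigma}=2$, we use the key observation: if $\sigma\in M$, then $\Frob\circ\sigma\in M$. Indeed, if $\sigma\in M$ with chain length $s$, then from $\Frob\circ\sigma$ the chain $k_{\sigma}=2$, $k_{\Frob^{-1}\circ\sigma}=\dots=2$, $k_{\Frob^{-s}\circ\sigma}=1$ shows $\Frob\circ\sigma\in M$ with length $s+1$. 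Consequently either $\sigma\notin M$ and $k'_\sigma\geq 2$, or $\sigma\in M$ and $k'_{\sigma}=2-1+p=p+1$.
\end{itemize}
This observation for $k_\sigma=2$ is the only point that needs care: one must rule out a $\sigma$ with $k_{\sigma}=2$ lying in $M$ while $\Frob\circ\sigma$ does not.

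\textbf{Step 3: $\tilde M\subset M$.} For the first condition in the definition of $\tilde M$ this is built in. For the second condition, $k_{\Frob^{-1}\circ\mu}=1$, so $\mu\in M$ with $s=1$.

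\textbf{Step 4: the formula for $k^{\mu}$ and its regularity.} Since $\mu\in M$, the product $f\cdot\prod_{\tau\in M\setminus\{\mu\}}\Ha_\tau$ has weight $k'-k_{\Ha_\mu}=k'+e_{\mu}-pe_{\Frob^{-1}\circ\mu}$. Applying $\Theta_\mu$ shifts the weight as in Section \ref{partoperators}.
\begin{itemize}
\item If $\Frob\circ\mu\neq\mu$, $\Theta_\mu$ adds $e_\mu+pe_{\Frob^{-1}\circ\mu}$.
\item If $\Frob\circ\mu=\mu$, the product has weight $k'-(p-1)e_\mu$, and $\Theta_\mu$ adds $(p+1)e_\mu$.
\end{itemize}
In both cases $k^{\mu}=k'+2e_{\mu}$ and $l^\mu=-e_\mu$. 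Hence $k^{\mu}_\sigma\geq k'_\sigma\geq 2$ for all $\sigma$ by Step 2, so $(k^{\mu},l^{\mu})$ is regular.
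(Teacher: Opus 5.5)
Your proposal is correct and follows essentially the same route as the paper. Both compute $k'$ and $k^{\mu}$ directly from the weight shifts of $\Ha_{\tau}$ and $\Theta_{\mu}$, note that the two weights differ only at $\mu$ (your identity $k^{\mu}=k'+2e_{\mu}$ matches the paper's formulas for $k^{\mu}_{\mu}$ and $k'_{\mu}$), and then check case by case that every entry is at least $2$; your indicator formula and the observation that $k_{\sigma}=2$, $\sigma\in M$ forces $\Frob\circ\sigma\in M$ spell out what the paper's case table for $k'_{\tau}$ uses implicitly.
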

\begin{proof}
We fix an element $\mu \in \tilde{M}$. Then we have 
\[
k^{\mu}_{\mu}=
\begin{cases}
k_{\mu}+1, & \text{if } k_{\mu} \neq 2, \\ 
k_{\mu}+1+p, &  \text{if } k_{\mu} = 2, \\ 
\end{cases}
\text{ and }
k'_{\mu}=
\begin{cases}
k_{\mu}-1, &  \text{if }k_{\mu} \neq 2, \\ 
k_{\mu}-1+p, &  \text{if } k_{\mu} = 2, \\ 
\end{cases}
\]
Now note that we have
\[
(k'_{\tau},l'_{\tau})=(k^{\mu}_{\tau},l^{\mu}_{\tau})
\]
for all remaining embeddings, that is, for all $\tau \in \Sigma$ such that $\tau \neq \mu$.

So next consider $\tau \in \Sigma$ such that $\tau \neq \mu$. Then we have the following
\[
k'_{\tau}=k^{\mu}_{\tau}= 
\begin{cases}
    k_{\tau}, & k_{\tau} \geq 2, \tau \not \in M, \\
    k_{\tau}-1, &  k_{\tau} \geq 3, \tau \in M, \\
    k_{\tau}+p-1, &  k_{\tau} = 2, \tau \in M, \\
    p,  & k_{\tau}=1, \tau \in M, \\
    p+1, & k_{\tau}=1, \tau \not \in M,
\end{cases}
\]
so the result follows.
 \end{proof}

\subsection{Statement and proof of the theorem}
\label{statement}
We can now state the theorem and use all the results in the previous section to prove the main theorem.

\begin{theorem}
\label{mainthm}
Let $\rho: G_F \to \GL_2(\Fpb)$ be an irreducible representation. Let $(k,0) \in \Z_{\geq 1}^{\Sigma} \times \Z^{\Sigma}$ be an irregular weight such that $k \in \Xi_{\min}$. Suppose $\rho$ is geometrically modular of weight $(k,0)$, then 
\begin{enumerate}
\item $\rho$ is geometrically modular of weight $(k',0)$,
\item for each $\mu \in \tilde{M},$ $\rho$ is geometrically modular of weight $(k^{\mu},l^{\mu})$.
\end{enumerate}
On the other hand, if $(1)$ and $(2)$ hold, then, if we assume in addition that,
\begin{enumerate}[resume]
\item $\rho|_{G_{F_v}}$ is not of the form $\mtwosmall{\chi_1}{\ast}{0}{\chi_2}$
where $\chi_1|_{I_{F_v}}=\prod_{\tau \in \Sigma_v} \omega^{-l_{\tau}^{\mu}}_{\tau}$ and $\chi_2|_{I_{F_v}}=\prod_{\tau \in \Sigma_v} \omega^{1-k_{\tau}^{\mu}-l_{\tau}^{\mu}}_{\tau}$
for any $\mu \in \tilde{M}$ and any place $v \mid p$,
\item there is no place $v \mid p$ such that $k_{\tau}=1$ for all $\tau \in \Sigma_v$, 
\item $k_{\mu} \not \equiv 1 \mod p$ for all $\mu \in \tilde{M},$
\end{enumerate}
the converse holds, i.e. $\rho$ is geometrically modular of weight $(k,0)$.
\end{theorem}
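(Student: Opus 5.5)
The forward direction is immediate from Theorem \ref{DSmain}. If $\rho$ arises from an eigenform $f$ of weight $(k,0)$, then repeated application of part (4) for $\tau\in M$ gives modularity of weight $(k',0)$, which is the weight of $f'$ in (\ref{defkprime}). For $\mu\in\tilde M$, we first apply part (4) for $\tau\in M\setminus\{\mu\}$ and then part (5) for $\Theta_\mu$. This gives modularity of weight $(k^\mu,l^\mu)$, the weight of $f^\mu$ in (\ref{defkmu}). Nothing beyond bookkeeping of weights is needed here.

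For the converse we generalise the argument of \cite[Lemma 11.3.1]{DS}. Since $(k',0)$ is regular, Theorem \ref{DSmain}(3) gives a normalised eigenform of weight $(k',0)$. The Hecke operators $T_v$ for $v\mid\mathfrak n$ and $v\mid p$ commute, so applying the usual stabilisation operators (products of $T_v-\alpha$) we obtain a strongly stabilised eigenform $g'$ of weight $(k',0)$ giving rise to $\rho$. For this we need that some $T_v$-eigenvalue at $v\mid p$ can be made $0$ after twisting. Hypothesis (3) is exactly what excludes the ordinary-type shape in which the only possible $U_p$-eigenvalue is nonzero, so it should let us arrange $T_vg'_\xi=0$. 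We do the same for each $\mu\in\tilde M$ in weight $(k^\mu,l^\mu)$, obtaining strongly stabilised $g^\mu$.

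The key step is to compare $\Theta_\mu(g')$ with $\Ha_\mu^{a}g^\mu$, where $a$ is chosen so the weights agree. Note that $k^\mu$ and $k'$ differ only at $\mu$ (by $2$, or by $2$ after the extra $p$ when $k_\mu=2$). Also, $\Theta_\mu$ raises the weight at $\mu$ and at $\Frob^{-1}\circ\mu$ by $1$ and $p$, which is compensated by the $\Ha_\mu$ factor of weight $pe_{\Frob^{-1}\circ\mu}-e_\mu$. By the remark following Theorem \ref{thm822}, both are strongly stabilised eigenforms for $\rho$ of the same weight, hence equal by Lemma \ref{stronglystablemma}. We need $\Theta_\mu(g')\neq0$ here; Theorem \ref{DSmain}(2) should give this, possibly together with hypothesis (4), which rules out the degenerate parallel-one situation. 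Hence $\Theta_\mu(g')$ is divisible by $\Ha_\mu$. Since $k'_\mu\equiv k_\mu-1\not\equiv 0 \pmod p$ by hypothesis (5), the second part of Theorem \ref{thm822} shows that $g'$ itself is divisible by $\Ha_\mu$. We then need to propagate this to every $\tau\in M$, not only $\tau\in\tilde M$.

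This propagation is the main obstacle. For $\tau\in M\setminus\tilde M$ (chains of $2$'s ending in a $1$), we plan to peel off Hasse invariants inductively along each Frobenius orbit. Once $g'/\Ha_\mu$ is known, its weight at the next embedding drops into the range where Theorem \ref{hasseiso} applies ($pk_\tau<k_{\Frob^{-1}\circ\tau}$), or into the range where the $\Theta$-divisibility argument can be repeated. The second case in the definition of $\tilde M$ (the patterns $(2,1,1)$ and $(2,1,2)$) is designed so that each step of this induction has an applicable tool. Carrying out this case analysis, while using $k\in\Xi_{\min}$ to ensure the intermediate weights behave, yields $g'=f\prod_{\tau\in M}\Ha_\tau$ with $f$ of weight $(k,0)$. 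Since multiplication by $\Ha_\tau$ is injective and Hecke-equivariant, $f$ is an eigenform giving rise to $\rho$, which proves the converse.
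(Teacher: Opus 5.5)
Your overall architecture is the paper's: the forward direction from Theorem \ref{DSmain}(4),(5), and for the converse the identity $\Theta_\mu(g')=\Ha_\mu g^\mu$ via Lemma \ref{stronglystablemma}, then Theorem \ref{thm822} with (5), then Theorem \ref{hasseiso}. However, the step producing a \emph{strongly stabilised} $g'$ of weight $(k',0)$ fails, because hypothesis (3) does not say what you need.

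For each $\mu$, (3) excludes the ordinary shape attached to $(k^\mu,l^\mu)=(k'+2e_\mu,-e_\mu)$. At a place $w\mid p$ with $\mu\notin\Sigma_w$ this coincides with the shape attached to $(k',0)$. So there (3) does give $T_w g'_\xi=0$, by the same mechanism the paper uses for $f^\mu$ (\cite[Corollary 10.7.2]{DS}).

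At the place $v$ with $\mu\in\Sigma_v$ the situation is different. The excluded shape has $\chi_1|_{I_{F_v}}=\omega_\mu$. The shape relevant to $g'$ has $\chi_1$ unramified and $\chi_2|_{I_{F_v}}=\prod_{\tau\in\Sigma_v}\omega_\tau^{1-k'_\tau}=\prod_{\tau\in\Sigma_v}\omega_\tau^{1-k_\tau}$, i.e.\ it is the ordinary shape of weight $(k,0)$ itself. When all of $\tilde M$ lies above a single place (e.g.\ $p$ inert, the setting of \cite{DS}), nothing in the hypotheses excludes this. A weight $(k',0)$ eigenform for such a $\rho$ may then have non-zero $T_v$-eigenvalue.

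Products of $T_v-\alpha$ cannot repair this, since they only project onto eigenvalues that already occur. With the level at $p$ fixed at $\GL_2(\mathcal{O}_{F,p})$ there is no old-form mechanism at $p$. For $v\mid p$, vanishing of $T_v$ is a consequence of non-ordinarity, not something you can arrange. So you cannot obtain strong stabilisation of $\Theta_\mu(g')$ from the Remark after Theorem \ref{thm822} applied to $g'$.

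The missing idea is that $g'$ never needs to be strongly stabilised at $v$. The paper takes $g'$ merely stabilised and uses that $\Theta_\mu$ itself kills the $T_v$-eigenvalue at the place containing $\mu$ (\cite[Proposition 9.4.1]{DS}). Hence $\Theta_\mu(g')$ is strongly stabilised whether or not $\rho$ is ordinary at $v$. Hypothesis (3) is spent on the other side, making $g^\mu$, hence $\Ha_\mu g^\mu$, strongly stabilised. With this substitution your comparison (the exponent is $a=1$) and the divisibility step go through.

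There are also three smaller corrections:
\begin{enumerate}
\item Before invoking Lemma \ref{stronglystablemma}, which is a uniqueness statement in a fixed space, you must put $g'$ and all $g^\mu$ at a common level $U_1(\mathfrak n)$. The paper takes $\mathfrak n=\mathfrak m^2\prod_\mu\mathfrak m_\mu^2$ and uses \cite[Lemma 10.6.2]{DS}.
\item Non-vanishing of $\Theta_\mu(g')$ is automatic, since $\Theta_\mu$ preserves normalised eigenforms. Hypothesis (4) plays no role there. It enters in the propagation step: without it a whole orbit $\Sigma_v$ could lie in $M$ with $\tilde M\cap\Sigma_v=\emptyset$, leaving nowhere to start peeling off Hasse invariants.
\item The propagation uses only Theorem \ref{hasseiso} (Lemma \ref{HasDiv}). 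No further $\Theta$-argument is available there, because the hypotheses supply no weight $(k^\tau,l^\tau)$ for $\tau\notin\tilde M$.
\end{enumerate}
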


We note first that the theorem holds just as well for $(k,l)$, with the regular weights again obtained from the formulae (\ref{defkprime}) and (\ref{defkmu}). We choose $l=0$ to ease exposition.

\begin{remark}
Condition (3) in our proof will ensure that certain Hilbert modular forms in our proof will be strongly stabilised so we can apply Lemma \ref{stronglystablemma}. We will then use this to obtain divisibility by the desired Hasse invariants.
\end{remark}

We exclude the weights in (4) as these would require methods similar to the aforementioned \cite{dimitrovwiese} and \cite{geekassaei}, and \cite{demaria}. The condition (5) is vital to our proof -- without it we could not obtain divisibility by Hasse invariants as a consequence of Theorem \ref{thm822}.

\begin{proof}
First assume $\rho$ is geometrically modular of $f \in M_{(k,0)}(U;E)$. By Theorem \ref{DSmain}(4) it is also geometrically modular of weight $(k',0)$ by multiplying $f$ by a product of partial Hasse invariants as in (\ref{defkprime}). For each embedding $\mu \in \tilde{M}$ we obtain forms $f^{\mu}$ of weight $(k^{\mu},l^{\mu})$ by multiplying by a product of partial Hasse invariants and acting on the result with partial Theta operators as in (\ref{defkmu}) in which case it follows from Theorem \ref{DSmain}(4-5) that $\rho$ is geometrically modular of these weights.
\par
For the converse, suppose (1)--(5) all hold. 
By (1) and Theorem \ref{DSmain}(3) $\rho$ arises from a normalised eigenform in $M_{{k'},{0}}(U_1(\mathfrak{m});E)$ for an ideal $\mathfrak{m}$ prime to $p$ (and sufficiently large $E$). 
From (2), and for any $\mu \in \tilde{M}$, Theorem \ref{DSmain}(3) again implies that $\rho$ arises from a normalised eigenform in $M_{{k}^{\mu},{l}^{\mu}}(U_1(\mathfrak{m}_{\mu});E)$ for some ideal $\mathfrak{m}_{\mu}$ prime to $p$.

Let $\mathfrak{n}=\mathfrak{m}^2  \prod_{\mu \in \tilde{M}} \mathfrak{m}_{\mu}^2$. Then $\mathfrak{n} \subset \mathfrak{m}$ satisfies the conditions in \cite[Lemma 10.6.2]{DS}, so this means $\rho$ arises from a stabilised eigenform $f' \in M_{k',0}(U_1(\mathfrak{n});E)$. Similarly, for each $\mu \in \tilde{M}$, the ideal $\mathfrak{n} \subset \mathfrak{m}_{\mu}$  satisfies the conditions in \cite[Lemma 10.6.2]{DS} with $\mathfrak{m}=\mathfrak{m}_{\mu}$, so this means $\rho$ arises from a stabilised eigenform $f^{\mu} \in M_{{k}^{\mu},{l}^{\mu}}(U_1(\mathfrak{n});E)$.

Fix $\mu \in \tilde{M}$. By \cite[Proposition 9.4.1]{DS} we can act on $f'$ with $\Theta_{\mu}$ to obtain a strongly stabilised eigenform $\Theta_{\mu}(f')$ of weight $(k'+p e_{\mu} + e_{\Frob^{-1} \circ \mu}, - e_{\mu})$ and level $\mathfrak{n}$. By assumption (3) and \cite[Corollary 10.7.2]{DS}, we find that $\Ha_{\mu}f^{\mu}$ is a strongly stabilised eigenform of weight $(k^{\mu}+p e_{\mu} - e_{\Frob^{-1} \circ \mu}, - e_{\mu})$ and level $\mathfrak{n}$. 

Now by Lemma \ref{stronglystablemma}, the modular form that $\rho$ arises from with these specifics is unique so that
\[
\Theta_{\mu}(f')=\Ha_{\mu}f^{\mu}.
\]
By Theorem \ref{thm822} and by assumption (5) we know that $f'$ is divisible by $\Ha_{\mu}$. 

We do this step for every $\mu \in \tilde{M}$ to find that $\rho$ is geometrically modular of a form $g$ such that
\begin{align}
\label{kg}
f'=g \prod_{\mu \in \tilde{M}} \Ha_{\mu}.
\end{align}
The form $g$ is of level $\mathfrak{n}$ and of weight $(k' - \sum_{\mu \in \tilde{M}}k_{\Ha_{\mu}},0)$. If this equals $(k,0)$, the proof is done.

If not, this means that the set $M':=M \setminus \tilde{M}$ is non-empty. We finish the proof by showing $g$ is divisible by $\prod_{\tau \in {M'}} \Ha_{\tau}$. We let $k'':=k' - \sum_{\mu \in \tilde{M}}k_{\Ha_{\mu}}$, so that $g$ has weight $(k'',0)$. Suppose $\tau \in M'$, then $\tau$ falls into one of the following three cases:
\begin{enumerate}[label=\roman*,align=CenterWithParen]
\item $k_{\tau}=1, \tau \in M$,
\item $k_{\tau}=2, k_{\Frob^{-1} \circ \tau}=1, k_{\Frob^{-2} \circ \tau} \neq 1$, and if $k_{\Frob^{-2} \circ \tau}=2$ then $\Frob^{-2} \circ \tau \not \in M$,
\item $k_{\tau}=2, k_{\Frob^{-1} \circ \tau}=2$ with $\tau \in M$.
\end{enumerate}

All of these cases follow from repeated applications of Theorem \ref{hasseiso}, which is the content of Lemma \ref{HasDiv}.
In the first case, for some $s \geq 1$ there must be an embedding $\Frob^{s} \circ \tau \in \tilde{M}$ such that $k_{\Frob^{s-1} \circ \tau}= \dots = k_{\tau}=1$. We find 
\begin{align}
\label{case1}
(k''_{\Frob^{s-1} \circ \tau}, \dots, k''_{\tau}, k''_{\Frob^{-1} \circ \tau})= 
\begin{cases} (0,p, \dots,p, p), & \text{if } k_{\Frob^{-1}\circ \tau}=1, \Frob^{-1}\circ \tau  \in M,  \\
(0,p, \dots,p, p+1), & \text{if }  k_{\Frob^{-1}\circ \tau}=1, \Frob^{-1}\circ \tau \not \in M \\
 (0,p, \dots,p, p+1), & \text{if } k_{\Frob^{-1}\circ \tau}=2, \ftau{-1} \not \in \tilde{M} \\
 (0,p, \dots,p, p+2), & \text{if } k_{\Frob^{-1}\circ \tau}=2, \ftau{-1} \in \tilde{M} \\
\end{cases}
\end{align}
with possibly no instances of $p$ in the last three cases, and it follows from Lemma \ref{HasDiv}. 

We treat (ii) and (iii) simultaneously. In both cases there exist $s \geq 1$ and $t \geq 0$ such that $\Frob^{s+t} \circ \tau \in \tilde{M}$ and such that
\begin{align*}
k_{\Frob^{s+t-1} \circ \tau}=\dots=k_{\Frob^s \circ \tau}=1, k_{\Frob^{s-1} \circ \tau}= \dots = k_{\tau}=2,
\end{align*}
where $k_{\Frob^{-1} \circ \tau}=2$ with $\Frob^{-1} \circ \tau \in M$ or $k_{\Frob^{-1}\circ \tau}=1$ with $\Frob^{-1} \circ \tau \not \in M$.
Suppose first $t=0$, then we find 
\begin{align}
\label{eq: tis0}    
(k''_{\Frob^{s-1} \circ \tau}, \dots, k''_{\tau},k''_{\Frob^{-1} \circ \tau})=
\begin{cases}
(1,p+1, \dots, p+1,p+2), &   k_{\Frob^{-1} \circ \tau}=2, \Frob^{-1} \circ \tau \in \tilde{M}, \\
(1,p+1, \dots, p+1,p+1), &   \text{otherwise}.
\end{cases}
\end{align}
with no instances of $p+1$ if $s=1$ in the first case.
If $t \geq 1$, then we obtain 
\begin{align}
\label{eq: tis1} \nonumber
(k''_{\Frob^{s+t-1} \circ \tau},\dots, &k''_{\Frob^s \circ \tau}, k''_{\Frob^{s-1} \circ \tau} \dots, k''_{\tau},k''_{\Frob^{-1} \circ \tau})=\\
&\begin{cases}
(0,p, \dots,p, p+1,\dots, p+1,p+2), &   k_{\Frob^{-1} \circ \tau}=2, \Frob^{-1} \circ \tau \in \tilde{M}, \\
(0,p, \dots,p, p+1,\dots, p+1,p+1), &   \text{otherwise}.
\end{cases}
\end{align}
with no instances of $p$ if $t=1$.
We obtain divisibility by $\Ha_{\Frob^{s+t-1} \circ \tau} \cdots  \Ha_{\tau}$ by Lemma \ref{HasDiv} for any $s \geq 1$ and $t \geq 0$. 
\end{proof}

\begin{lemma}
\label{HasDiv}
Suppose $f$ is of weight $(k,l)$ with $k \in \Z^{\Sigma}_{\geq 0}$.
Suppose $k$ is such that for integers $s \geq 1, t \geq 0$ and an embedding $\tau \in \Sigma$ the tuple $(k_{\Frob^{s+t-1} \circ \tau}, \dots, k_{\tau}, k_{\Frob^{-1}\circ \tau})$ is as in \eqref{case1}, \eqref{eq: tis0} or \eqref{eq: tis1}. Then $f$ is divisible by $\Ha_{\Frob^{s+t-1} \circ \tau} \cdots \Ha_{\tau}$.
\end{lemma}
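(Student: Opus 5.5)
The plan is to peel off one partial Hasse invariant at a time, walking along the string of embeddings from $\tau$ up to $\Frob^{s+t-1}\circ\tau$. Each step uses Theorem \ref{hasseiso}: multiplication by $\Ha_{\sigma}$ is an isomorphism $M_{k-k_{\Ha_\sigma},l}(U;E)\to M_{k,l}(U;E)$ whenever $pk_\sigma<k_{\Frob^{-1}\circ\sigma}$. If this inequality holds for the current weight $k$ and embedding $\sigma$, then every form of weight $(k,l)$ is $\Ha_\sigma$ times a form of weight $(k-k_{\Ha_\sigma},l)$, where
\[
k-k_{\Ha_\sigma}=k+e_\sigma-pe_{\Frob^{-1}\circ\sigma}.
\]
So each step raises the entry at $\sigma$ by $1$ and lowers the entry at $\Frob^{-1}\circ\sigma$ by $p$. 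We then continue with the quotient form and the next embedding.

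The order must be chosen so that the hypothesis of Theorem \ref{hasseiso} holds at every stage. Start at $\sigma=\tau$, whose predecessor $\Frob^{-1}\circ\tau$ carries the entry $p$, $p+1$ or $p+2$, and compare this with $pk''_\tau$.

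\textbf{Case \eqref{case1}.} In the subcases ending in $(\dots,p,p+1)$ or $(\dots,p,p+2)$ we have $k_\tau=p$ with predecessor $>p$. In the pure-$p$ subcase the tail is $(\dots,p,p)$, so at first sight $p\cdot p<p$ fails. This shows the step must be taken in the right direction, which is the crux. The tuple reads from $\Frob^{s-1}\circ\tau$ (value $0$) down to $\Frob^{-1}\circ\tau$, and the natural place to start is the $0$ entry at the top: $\sigma_0=\Frob^{s+t-1}\circ\tau$ with $p\cdot 0<p=k_{\Frob^{-1}\circ\sigma_0}$. So we first divide by $\Ha_{\sigma_0}$. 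The new weight has entry $1$ at $\sigma_0$ and entry $0$ at $\Frob^{-1}\circ\sigma_0$ (since $p-p=0$), and the hypothesis now holds at $\Frob^{-1}\circ\sigma_0$ because $0<p$ (or $0<p+1$ at the $p+1$ block). Iterating downwards, each division turns the next entry into $0$, so the condition $0<(\text{next entry})$ holds until we reach $\tau$. At $\tau$ the entry is $0$ and the predecessor entry is $p$, $p+1$ or $p+2$, which is positive, so the last division by $\Ha_\tau$ also succeeds.

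\textbf{Cases \eqref{eq: tis0} and \eqref{eq: tis1}.} The same top-down induction works, but we must track the $p+1$ block. Dividing at $\sigma$ with entry $0$ turns a predecessor entry $p+1$ into $1$, not $0$, so the next step needs $p\cdot 1<k_{\Frob^{-1}\circ\sigma}$, i.e. a predecessor entry of at least $p+1$. Inside the $p+1$ block the predecessors are $p+1$, or $p+2$ at the very end, which is exactly why the tuples end in $p+1$ or $p+2$ rather than $p$. In \eqref{eq: tis0} the top entry is $1$ and the next entry is $p+1$ (or $p+2$ when $s=1$ in the first subcase), so $p<p+1$ holds from the start. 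Note that once the quotient is formed the tail entry $k_{\Frob^{-1}\circ\tau}$ drops to $1$ or $2$, which is harmless. The transition in \eqref{eq: tis1} from the last $p$ to the first $p+1$ goes from an entry $0$ to $1$, which is consistent.

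The inductive statement is: after dividing by $\Ha_{\Frob^{j}\circ\tau}\cdots\Ha_{\Frob^{s+t-1}\circ\tau}$, the entry at $\Frob^{j-1}\circ\tau$ is $0$ in the $p$-block and $1$ in the $(p+1)$-block, while its predecessor is unchanged. Once this is verified, $f$ factors as $\Ha_{\Frob^{s+t-1}\circ\tau}\cdots\Ha_\tau\cdot h$, and the product can be reordered freely. The main obstacle is bookkeeping: checking the inequality at each step, in particular at the junctions between blocks and at the tail, where the inequalities $p<p+1$ and $p<p+2$ (not $p<p$) are exactly what is needed. Degenerate cases, such as when the string wraps around all of $\Sigma_v$, should also be checked, and should be excluded by the fact that some entry of $\Sigma_v$ lies outside the string.
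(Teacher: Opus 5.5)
Your proposal is correct and follows the paper's proof: both divide by $\Ha_{\Frob^{s+t-1}\circ\tau}$ first and then apply Theorem \ref{hasseiso} repeatedly, moving down to $\Ha_{\tau}$. Your inductive bookkeeping (the current entry is $0$ in the $p$-block and $1$ in the $(p+1)$-block, compared against predecessors $p$, $p+1$, $p+2$) spells out what the paper calls ``clear''; one slip is your aside implying that starting at $\tau$ could work in the \eqref{case1} subcases ending in $p+1$ or $p+2$, which it cannot since $p^2>p+2$, but your top-down argument never relies on it.
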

\begin{proof}
This follows from reiteratively applying Theorem \ref{hasseiso}: in each case it is clear that $f$ is divisible by $\Ha_{\Frob^{s+t-1}\circ \tau}$. The result follows by noting that after this division we can apply Theorem \ref{hasseiso} to obtain divisibility by $\Ha_{\Frob^{s+t-2} \circ \tau}$ and observing we can continue to do this until the desired $\Ha_{\tau}$. 
\end{proof}

We demonstrate the last part of the proof of Theorem \ref{mainthm} in an example.

\begin{example}
Let $F$ be an extension of degree eight in which $p$ is inert. Fix $\tau_0 \in \Sigma$ and suppose $(k,0) \in \Z_{\geq 1}^{\Sigma} \times \Z^{\Sigma}$ is such that \[k=(k_{\tau_0},k_{\tau_1},k_{\tau_2},k_{\tau_3},k_{\tau_4},k_{\tau_5},k_{\tau_6},k_{\tau_7})=(1,1,k_2,2,2,1,2,2),
\]
with $k_2 \geq 3$. We find $\tilde{M}=\{\tau_2,\tau_4,\tau_7\}$ and $M\setminus\tilde{M}=\{\tau_0,\tau_3,\tau_5,\tau_6\}$. We have 
\begin{align*}
k'=&(p,p+1,k_2-1,p+1,p+1,p,p+1,p+1), \\
k^{\mu_1}=&(p,p+1,k_2+1,p+1,p+1,p,p+1,p+1), \\
k^{\mu_2}=&(p,p+1,k_2-1,p+1,p+3,p,p+1,p+1), \\
k^{\mu_3}=&(p,p+1,k_2-1,p+1,p+1,p,p+1,p+3). 
\end{align*}
By the above matching procedure we find $k''=(0,p+1,k_2,1,p+2,0,p+1,p+2)$. By Theorem \ref{hasseiso} we obtain divisibility by $\Ha_{\tau_0}, \Ha_{\tau_3}$ and $\Ha_{\tau_5}$. After division by these invariants we obtain a form of weight $(1,1,k_2,2,2,1,1,p+2)$. We apply Theorem \ref{hasseiso} again to obtain divisibility by $\Ha_{\tau_6}$ and we obtain a form of weight $(1,1,k_2,2,2,1,2,2)$.
\end{example}

\begin{remark}
Note that one could also hope to simplify Theorem \ref{mainthm} so that one just needs two regular weights, in particular since this is possible for the local Galois-theoretic analogue (as in \cite[Theorem 1.1]{hwcrys} and see also \cite{kls}). We now explain why this does not work here, or at least not with the same proof method. In the two weight version we would keep the regular weights $(k',l')$ but replace all the weights of the form $(k^{\mu},l^{\mu})$ with the weight $(k^{\theta},l^{\theta})$, which is the weight of the Hilbert modular form
 \[
 \Theta_{\tilde{M}} \left( f \cdot \prod_{\tau \in {M} \setminus \tilde{M}} \Ha_{\tau} \right),
 \]
 where $\Theta_{\tilde{M}}$ is the composition of $\Theta_{\mu}$ for all $\mu \in \tilde{M}$ (note the order does not matter due to commutativity as per \cite[Corollary 9.4.2]{DS}).  This is the global version of the weights $(k^{\theta},l^{\theta})$ in \cite[Theorem 1.1]{hwcrys}.
We now show by example why we cannot simplify Theorem \ref{mainthm} to a two weight version with our current proof strategy.

Let $F$ be a quartic extension with $p$ inert. Fix $\tau_0 \in \Sigma$. Let $(k,0) \in \Z_{\geq 1}^{\Sigma} \times \Z^{\Sigma}$ be such that $k=(k_{\tau_0},k_{\tau_1},k_{\tau_2},k_{\tau_3})=(k_0,1,k_2,1)$ with $k_0,k_2 \geq 3$. Then 
\begin{align*}
(k',l') &=((k_0-1,p+1,k_2-1,p+1),(0,0,0,0)), \text{ and } \\
(k^{\theta},l^{\theta})&=((k_0+1,p+1,k_2+1,p+1),(-1,0,-1,0)).
\end{align*}
Following the same strategy as in the proof above, we would obtain two modular forms $f'$ and $f^{\theta}$ of these weights and we would obtain an equality
\[
\Theta_{\tau_0} (\Theta_{\tau_2}(f'))=\Ha_{\tau_0} \Ha_{\tau_2} f^{\theta},
\]
By Theorem \ref{thm822} we would obtain that $\Theta_{\tau_2}(f')$ is divisible by $\Ha_{\tau_0}$.
However, we also need divisibility by $\Ha_{\tau_2}$, which is not given to us by the same result. We would need a result of the following form:
\[
\Ha_{\tau}\, | \, \Theta_{\tau'}(\Theta_{\tau}(f)) \iff \Ha_{\tau} \, | \, \Theta_{\tau}(f) \text{ or } p | k_{\tau}.
\]
This would enable us to apply Theorem \ref{thm822} again, and then we can finish the proof as above.
\end{remark}

\section{Main result}
\label{mainresultsection}
In this section we state and prove our main result. We show that for partial weight one weights geometric modularity of weight $(k,l)$ is equivalent to crystalline liftability of $\rho|_{G_{F_v}}$ with weights $(k_{\tau},l_{\tau})_{\tau \in \Sigma_v}$ for all $v|p$, under some hypotheses.

As explained in the introduction our proof will use known and conjectural relations between crystalline liftability and algebraic modularity on the one hand, and algebraic modularity and geometric modularity on the other hand. In particular, we use results of \cite{GLS15} to relate the existence of certain crystalline lifts with small weights to algebraic modularity, and vice versa. Recall algebraic modularity and geometric modularity are conjectured to be equivalent for regular weights, below we summarise what is known due to Diamond--Sasaki and Yang.

\begin{theorem}
\label{equivalence}
Let $\rho: G_F \to \GL_2(\Fpb)$ be a Galois representation that is irreducible, continuous and totally odd. Let $(k,l) \in \Z_{\geq 2}^{\Sigma} \times \Z^{\Sigma}$ be a weight with $k \in \Xi_{\min}$. If $\rho$ is algebraically modular of weight $(k,l)$ then $\rho$ is geometrically modular of weight $(k,l)$. If $\rho$ is geometrically modular of weight $(k,l)$, then $\rho$ is algebraically modular of weight $(k,l)$  in the following situations:
\begin{enumerate}
    \item $F$ is real quadratic, $p\geq 5$ inert in $F$ and if $p=5$ we have that $\rho|_{G_{F(\zeta_p)}}$ does not have projective image isomorphic to $A_5 \cong \PSL_2(\F_5)$,
    \item $p \geq \min \{5,[F:\Q]\}$ totally split in $F$ and if $p=5$ we have that $\rho|_{G_{F(\zeta_p)}}$ does not have projective image isomorphic to $A_5 \cong \PSL_2(\F_5)$,
    \item $p(k_{\tau}-2)>k_{\Frob^{-1} \circ \tau}-2$ for all $\tau \in \Sigma$.
\end{enumerate}
\end{theorem}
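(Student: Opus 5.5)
This theorem is a summary of results in the literature rather than something proved from scratch, so the plan is to assemble it from the relevant sources. The proof is by citation, with the one genuine task being to check that each cited statement's conventions and hypotheses match the ones used here.

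\textbf{Algebraic to geometric.} This direction is the result of Diamond and Sasaki \cite{DSram}. There $\rho$ algebraically modular of a regular weight $(k,l)$ is realised as a system of Hecke eigenvalues in the cohomology of a quaternionic Shimura curve. One transfers it, via Jacquet--Langlands type arguments together with weight-shifting, to a mod $p$ Hilbert modular form of weight $(k,l)$. The hypothesis $k\in\Xi_{\min}$ is what makes the weight obtained exactly $(k,l)$ rather than $(k,l)$ modified by partial Hasse invariants. This uses Theorem~\ref{hasseiso} and the fact, from \cite[Corollary 1.2]{DiamondKassaei}, that every form comes from the minimal cone. I would check that the normalisation of Hodge--Tate weights (where $\chi_{\mathrm{cyc}}$ has weight $-1$) and of $l$ agree between \cite{DSram}, \cite{DS} and \cite{bdj}. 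If they do not, one twists by a power of the cyclotomic character, using the twisting formalism for $f_\xi$ recalled above.

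\textbf{Geometric to algebraic.} Each of the three listed situations is quoted from the appropriate source.

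\begin{enumerate}
\item \emph{Case (3).} The inequality $p(k_\tau-2)>k_{\Frob^{-1}\circ\tau}-2$ is the strict interior condition under which forms in weight $(k,l)$ lift to characteristic zero. The plan is to lift a geometric eigenform $f$ of weight $(k,l)$ to a characteristic zero Hilbert eigenform of the same weight and level. This uses a vanishing-of-higher-cohomology result for the automorphic bundle on the minimal compactification, as in Diamond--Sasaki's argument or \cite{yang}, and after that Deligne--Serre gives an eigenform lift. One then applies Jacquet--Langlands and the standard comparison between classical weight $(k,l)$ forms and the cohomology of the algebraic representation $\otimes_\tau \mathrm{Sym}^{k_\tau-2}\otimes\det^{l_\tau}$. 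Finally, one reduces mod $p$ and takes a Jordan--Hölder constituent, which yields the Serre weight required for algebraic modularity.
\item \emph{Cases (1) and (2).} These are Yang's results \cite{yang}: the quadratic inert case, and the totally split case with $p\ge\min\{5,[F:\Q]\}$. Here the lifting argument fails on the boundary of the cone, and Yang instead uses Taylor--Wiles/patching type arguments. The $A_5$ exclusion when $p=5$ is exactly the Taylor--Wiles hypothesis needed there.
\end{enumerate}

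\textbf{Main obstacle.} The step most likely to cause trouble is not mathematical depth but bookkeeping. One must confirm that the notion of ``algebraically modular of weight $(k,l)$'' used in \cite{yang} and \cite{DSram}, namely the Serre weight $\otimes_\tau \mathrm{Sym}^{k_\tau-2}\overline{\F}_p^2\otimes\det^{l_\tau}$, coincides with the one in \cite{bdj} and \cite{GLS15} as later applied. One must also check that the level and neatness hypotheses ($U$ sufficiently small and $p$-neat) can be arranged; by Theorem~\ref{DSmain}(1) one may always pass to level $U_1(\mathfrak{n})$.
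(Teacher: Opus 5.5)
Your overall plan is the same as the paper's. The paper proves this statement purely by citation: the algebraic-to-geometric direction is \cite[Theorem 6.5.5]{DSram}, and the converse in the three listed situations is \cite[Theorems 4.14, 4.28 and 4.10]{yang}. In particular the paper handles situation (3) by citing Yang as well, and you should do the same, because the independent lifting argument you sketch for (3) has a genuine gap.

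The step that fails is ``lift $f$ to a characteristic zero Hilbert eigenform of the same weight and level''. In characteristic zero, the weight $(k,l)$ automorphic bundle on the $\GL_2$ Hilbert modular variety is only defined when $(k,l)$ is paritious, i.e.\ when $k_\tau+2l_\tau$ is independent of $\tau$. Non-paritious forms are a characteristic $p$ phenomenon, made possible by the $p$-neatness of $U$. Twisting by a character $\xi$ only changes $l$, so it cannot repair a $k$ whose entries have mixed parity.

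For the same reason, the lattice $\bigotimes_\tau \mathrm{Sym}^{k_\tau-2}\otimes\det^{l_\tau}$ gives no characteristic zero local system on a quaternionic Shimura curve. The central units act through $\epsilon\mapsto\prod_\tau \tau(\epsilon)^{k_\tau-2+2l_\tau}$, which is nontrivial on every finite-index subgroup of $\mathcal{O}_F^\times$ when the exponents are not parallel. So your Jacquet--Langlands/Eichler--Shimura step breaks as well.

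These are not marginal cases for this paper. In the cubic example above, $k'=(p,p+1,k_2-1)$ is non-paritious and satisfies (3) whenever $4\le k_2\le p$. At best your sketch covers paritious $k$, and even there the characteristic $p$ vanishing theorem is a substantial input you would have to quote. For the statement as given, the correct justification is the citation of Yang's results.
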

\begin{proof}
The direction from algebraic to geometric is \cite[Theorem 6.5.5]{DSram}. The other direction is due to Yang, see Theorem 4.14, Theorem 4.28 and Theorem 4.10 in \cite{yang}. 
\end{proof}

We should note that \cite{DS} already showed that if $\rho$ is geometrically modular of some weight, then $\rho$ is algebraically modular of \emph{some} weight \cite[Proposition 7.5.4]{DS}.

As discussed previously, we use a local analogue of Theorem \ref{mainthm} in our proof. The analogue proves that for any local representation, or indeed the restriction of a global representation $\rho|_{G_{F_v}}$ for $v|p$, crystalline liftability of small irregular weights can be predicted in terms of regular weights and vice versa \cite[Theorem 1.2]{hwcrys}. We use this result together with Theorem \ref{mainthm} to prove our main result.

\begin{theorem}
\label{locglobviceversa}
Let $F$ be a totally real field in which $p$ is unramified. Suppose that $\rho:G_F \to \GL_2(\Fpb)$ is irreducible and modular. Assume $\rho|_{G_{F(\zeta_p)}}$ is irreducible and if $p=5$, that $\rho|_{G_{F(\zeta_p)}}$ does not have projective image isomorphic to $A_5 \cong \PSL_2(\F_5)$.  Assume that for any weight $(k,l)$ with $k \in \Z^{\Sigma}_{\geq 2} \cap \Xi_{\min}$, if $\rho$ is geometrically modular of this weight it will also be algebraically modular of this weight. Let $(k,0) \in \Z_{\geq 1}^{\Sigma} \times \Z^{\Sigma}$ be an irregular weight such that
\begin{enumerate}
    \item $1 \leq k_{\tau} \leq p$ for all $\tau \in \Sigma$,
    \item there is no $\tau \in \Sigma$ with $k_{\tau}=2$ and $k_{\Frob^{-1} \circ \tau}=1$,
    \item there is no $v \mid p$ such that $k_{\tau}=1$ for all $\tau \in \Sigma_v$,
    \item if there is $v \mid p$ such that $k_{\tau} \geq 2$ for all $\tau \in \Sigma_v$, then $\rho|_{G_{F_v}}$ is not of the form  $\mtwosmall{\chi_1}{\ast}{0}{\chi_2}$ where $\chi_1$ is unramified and $\chi_2|_{I_{F_v}}=\prod_{\tau \in \Sigma_v} \omega^{1-k_{\tau}}_{\tau}$.
\end{enumerate}

Then $\rho$ is geometrically modular of weight $(k,0)$ if and only if $\rho|_{G_{F_v}}$ has a crystalline lift of weight $(k_{\tau},0)_{\tau \in \Sigma_v}$ for all $v \mid p$.
\end{theorem}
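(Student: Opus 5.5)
The proof will be a chain of equivalences, as sketched in the introduction:
\[
\text{geom.\ mod.\ of }(k,0) \iff \text{geom.\ mod.\ of }(k',0)\text{ and }(k^\mu,l^\mu)\ \forall \mu\in\tilde M \iff \text{alg.\ mod.\ of these} \iff \text{local crystalline lifts.}
\]
Since hypothesis (2) excludes $k_\tau=2,k_{\Frob^{-1}\circ\tau}=1$, the earlier remark applies: $\tilde M=\{\tau : k_\tau\neq 1,\ k_{\Frob^{-1}\circ\tau}=1\}$. The weights then simplify. We have $k'_\tau=k_\tau-1$ for $\tau\in M$ with $k_\tau\ge 3$, $k'_\tau=p$ for $k_\tau=1$ with $\tau \in M$, $k'_\tau=p+1$ for $k_\tau=1$ with $\tau\notin M$, and $k'_\tau=k_\tau$ otherwise. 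Moreover $k^\mu$ differs from $k'$ only at $\mu$, where $k^\mu_\mu=k_\mu+1$, and $l^\mu=-e_\mu$.

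The first step is to check that $k\in\Xi_{\min}$ and that the regular weights are in $\Xi_{\min}$. For the first, $pk_\tau\ge p\ge k_{\Frob^{-1}\circ\tau}$ by (1). For the second, these weights are of the shape $\le p+1$ in entries preceded by $p$ or $p+1$, and this needs a short case check. The check is needed so that Theorem \ref{mainthm} and the hypothesis on geometric$\Rightarrow$algebraic apply. Then I will verify the extra hypotheses (3)--(5) of Theorem \ref{mainthm}.

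Hypothesis (4) there is exactly (3) here. Hypothesis (5) holds because every $\mu\in\tilde M$ has $k_\mu\ge 3$ by (2), and $k_\mu\le p$ by (1), so $k_\mu\not\equiv 1\bmod p$. Hypothesis (3) there concerns reducible $\rho|_{G_{F_v}}$ with characters determined by $(k^\mu,l^\mu)$. For places $v$ with some $\tau\in\Sigma_v$ of weight $1$, I expect this to be either automatically incompatible with the existence of the relevant crystalline lifts (via the local results of \cite{hwcrys}) or ruled out by direct computation. For places with all $k_\tau\ge 2$, the regular weights agree with $k$ on $\Sigma_v$, and the excluded shape reduces to hypothesis (4) here (after twisting by the character of weight $-e_\mu$, which is trivial above $v$ when $\mu\notin\Sigma_v$). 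Reconciling condition (3) of Theorem \ref{mainthm} with hypothesis (4) here is the fiddliest bookkeeping, and I expect it to be the main obstacle. I would first try reducing it to the fact that for $\mu\notin\Sigma_v$ the local weights are just $(k_\tau,0)$.

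Next comes the local-to-global bridge. By the assumption here together with Theorem \ref{equivalence}, geometric and algebraic modularity agree for the regular weights $(k',0)$ and $(k^\mu,l^\mu)$. By the Buzzard--Diamond--Jarvis results of Gee--Liu--Savitt \cite{GLS15}, algebraic modularity of a regular weight is equivalent to each $\rho|_{G_{F_v}}$ having a crystalline lift of the corresponding Hodge--Tate weights. This uses the Taylor--Wiles hypotheses, which are exactly the conditions imposed on $\rho|_{G_{F(\zeta_p)}}$. The needed smallness of the weights follows from (1): all entries are at most $p+1$.

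Finally, I will apply \cite[Theorem 1.2]{hwcrys} place by place. It says that $\rho|_{G_{F_v}}$ has a crystalline lift of weight $(k_\tau,0)_{\tau\in\Sigma_v}$ if and only if it has lifts of weights $(k'_\tau,0)$ and $(k^\mu_\tau,l^\mu_\tau)$ for $\mu\in\tilde M\cap\Sigma_v$. For $\mu\notin\Sigma_v$, the weight $(k^\mu,l^\mu)$ restricted to $\Sigma_v$ equals $(k',0)$, so nothing new is required. Its hypotheses (weights in $[1,p]$, no $(2,1)$ pattern, not all weights one, and the non-split condition in the all-regular case) match (1)--(4). Chaining these equivalences with Theorem \ref{mainthm} gives both directions.
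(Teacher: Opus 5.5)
\textbf{Overall.} Your chain of equivalences is the paper's own: \cite[Theorem 1.2]{hwcrys} place by place, Theorem A of \cite{GLS15}, Theorem \ref{equivalence} plus the standing geometric-to-algebraic assumption, then Theorem \ref{mainthm}. Your checks that $k$, $k'$, $k^{\mu}$ lie in $\Xi_{\min}$, and that hypotheses (4) and (5) of Theorem \ref{mainthm} hold, are correct; the paper leaves these implicit. (Hypothesis (4) of the theorem is not an input to \cite{hwcrys}; its only role is to supply (3) of Theorem \ref{mainthm}.)

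\textbf{The gap.} You leave hypothesis (3) of Theorem \ref{mainthm} unproved at places $v$ carrying a weight-one embedding; you only say you expect it to hold. It is not a formal consequence of \cite{hwcrys}, which only produces lifts. The paper proves it for $\mu \in \Sigma_v \cap \tilde{M}$ in Lemma \ref{impliedshape}, using the explicit shape of reducible reductions. The argument runs as follows.
\begin{itemize}
\item Label $\tau_0=\mu$, so $k_{\tau_1}=1$ and $3 \leq k_{\mu} \leq p$. If $\rho|_{G_{F_v}}$ had the excluded shape, its subcharacter would be $\omega_{\mu}$ on $I_{F_v}$.
\item $\rho|_{G_{F_v}}$ also has a crystalline lift of weight $(k'_{\tau},0)_{\tau \in \Sigma_v}$ with $2 \leq k'_{\tau} \leq p+1$. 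So \cite[Corollary 7.11]{GLS14} gives $J \subset \Sigma_v$ with that subcharacter equal to $\prod_{\tau \in J}\omega_{\tau}^{1-k'_{\tau}}$ on inertia.
\item Comparing exponents gives $\sum_i d_i p^{f-1-i} \equiv 0 \bmod p^f-1$, where $d_i=s^{\mu}_i-s'_i \in [-p,p]$, $s^{\mu}=(1,0,\dots,0)$, and $s'_i=1-k'_{\tau_i}$ for $\tau_i \in J$, $s'_i=0$ otherwise.
\item By \cite[Lemma 7.1]{GLS14}, $(d_i)$ is a concatenation of zeros and blocks $\pm(-1,p-1,\dots,p-1,p)$. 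The constant solutions $\pm(p-1,\dots,p-1)$ are excluded: the run of weight-one embeddings $\tau_1,\tau_2,\dots$ ends at some $\tau_j \notin M$, where $k'_{\tau_j}=p+1$ and so $d_j \in \{0,p\}$.
\item If $\mu \notin J$, then $d_0=1$ while $d_1 \in \{0,p-1,p\}$. A $1$ can only open a negative block, whose next entry is negative, so this is impossible.
\item If $\mu \in J$, then $d_0=k_{\mu}-1 \in [2,p-1]$, which forces $k_{\mu}=p$ and $d_0=p-1$. The block containing it must begin with some $d_i=-1$, $i \neq 0$, i.e.\ $s'_i=1$. This is impossible, since every $s'_i \in \{0\}\cup[-p,-1]$.
\end{itemize}

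\textbf{The case $\mu \notin \Sigma_v$.} Your proposed reduction fails at places with a weight-one embedding. There the local weight of $(k^{\mu},l^{\mu})$ is $(k'_{\tau},0)_{\tau\in\Sigma_v}$, not $(k_{\tau},0)$. Since $\prod_{\tau\in\Sigma_v}\omega_{\tau}^{k'_{\tau}-k_{\tau}}=1$, the excluded shape is the ordinary one: $\chi_1$ unramified and $\chi_2|_{I_{F_v}}=\prod_{\tau\in\Sigma_v}\omega_{\tau}^{1-k_{\tau}}$. This shape is compatible with every crystalline lift in play: a split ordinary $\rho|_{G_{F_v}}$ has a lift of weight $(k_{\tau},0)$, hence of all the others by \cite{hwcrys}. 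Hypothesis (4) only concerns places where all $k_{\tau}\geq 2$, so no local computation can rule this shape out.

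Lemma \ref{impliedshape} is likewise stated only for $\mu\in\Sigma_v\cap\tilde{M}$. So when two or more places above $p$ carry weight-one embeddings, this sub-case is not covered by the paper's argument as written either. Closing it would need something extra, for instance imposing (4) at every place, or the ordinary-stabilisation variant the paper mentions after stating the theorem.
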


Before we give the proof, we provide the reasons behind conditions (1)--(4). The first two of these come from the $p$-adic Hodge theory results in \cite{hwcrys}, the methods therein come from \cite{GLS14} which requires the weights to be small. Condition (2) could potentially be removed, see \cite[Remark 4.13]{hwcrys}. The third condition is a condition therein as well as in this paper's Theorem \ref{mainthm}. The final condition makes sure we can apply Theorem \ref{mainthm}, i.e. forces that condition (3) in that theorem is satisfied in cases not covered by the auxiliary Lemma \ref{impliedshape}. Note if there exists $v|p$ as in (4) above, locally the weight $(k,0)$ will be regular at $v$ and we find that $(k_{\tau},l_{\tau})_{\tau \in \Sigma_v}=(k'_{\tau},l'_{\tau})_{\tau \in \Sigma_v}= (k^{\mu}_{\tau},l^{\mu}_{\tau})_{\tau \in \Sigma_v}$. We finally remark that one could explore removing this condition by choosing the relevant forms to be ordinary at $v$ with prescribed eigenvalue, for any place $v$ that fails condition (4), but we do not pursue this in the present article.

\begin{proof}
Suppose first that for each $v \mid p$, $\rho|_{G_{F_v}}$ has a crystalline lift of weight $(k_{\tau},0)_{\tau \in \Sigma_v}$. Then by Theorem 1.2 of \cite{hwcrys}, we find for each $v \mid p$ that $\rho|_{G_{F_v}}$ has crystalline lifts of weight $(k'_{\tau},l'_{\tau})_{\tau \in \Sigma_v}$ and of weights $(k^{\mu}_{\tau},l^{\mu}_{\tau})_{\tau \in \Sigma_v}$ for all $\mu \in \tilde{M}$. By Theorem A of \cite{GLS15}, we find that $\rho$ is algebraically modular of weights $(k',l')$ and $(k^{\mu},l^{\mu})$ for each $\mu \in \tilde{M}$, since $k'_{\tau} \leq p+1$ and $k^{\mu}_{\tau} \leq p+1$ for all $\tau \in \Sigma$ and all $\mu \in \tilde{M}$. By Theorem \ref{equivalence}, $\rho$ is also geometrically modular of these weights. Using Lemma \ref{impliedshape} and (4) to apply Theorem \ref{mainthm}, we then find that $\rho$ is geometrically modular of weight $(k,0)$.

Now suppose $\rho$ is geometrically modular of weight $(k,0)$, then by Theorem \ref{mainthm} it is also geometrically modular of the weights $(k',l')$ and $(k^{\mu},l^{\mu})$ for each $\mu \in \tilde{M}$. By our assumptions, it is then also algebraically modular of these weights. Again by Theorem A of \cite{GLS15}, we find that for each $v \mid p$, $\rho|_{G_{F_v}}$ has a crystalline lift of weights $(k'_{\tau},l'_{\tau})_{\tau \in \Sigma_v}$ and of weights $(k^{\mu}_{\tau},l^{\mu}_{\tau})_{\tau \in \Sigma_v}$ for all $\mu \in \tilde{M}$. By Theorem 1.2 of \cite{hwcrys}, indeed we find that $\rho|_{G_{F_v}}$ has a crystalline lift of weight $(k_{\tau},0)_{\tau \in \Sigma_v}$ for all $v \mid p$.
\end{proof}

The direction from crystalline liftability to geometric modularity in the above result is significantly easier owing to the progress listed in Theorem \ref{equivalence}. This is also apparent in \cite{DS}, by way of Theorem 11.4.3. However, by Theorem \ref{equivalence} we know the implication from geometric modularity to algebraic modularity for all weights such that $p(k_{\tau}-2)>k_{\Frob^{-1} \circ \tau}-2$. In our proof we only need this result for the weights $(k',l')$ and $(k^{\mu},l^{\mu})$ for each $\mu \in \tilde{M}$, and there will be many $(k,0)$ such that this holds.

We finish with the proof of an auxiliary lemma enabling us to apply Theorem \ref{mainthm} in the proof of Theorem \ref{locglobviceversa}.
\begin{lemma}
\label{impliedshape}
Let $(k,0)$ be a small irregular weight as in Theorem \ref{locglobviceversa}. Let $v|p$. Suppose $\rho|_{G_{F_v}}: G_{F_v} \to \GL_2(\Fpb)$ has a crystalline lift of weights $(k'_{\tau},l'_{\tau})_{\tau \in \Sigma_v}$ and of weights $(k^{\mu}_{\tau},l^{\mu}_{\tau})_{\tau \in \Sigma_v}$ for all $\mu \in \tilde{M}$ as in Section \ref{weightcombinatorics}. Let $\mu \in \Sigma$ be such that $\mu \in \Sigma_v \cap \tilde{M}$. Then $\rho|_{G_{F_v}}$ is not of the form 
\[\mtwo{\chi_1}{\ast}{0}{\chi_2},\]
where 
\[
\chi_1|_{I_{F_v}}=\prod_{\tau \in \Sigma_v} \omega^{-l_{\tau}^{\mu}}_{\tau}\text{ and } \chi_2|_{I_{F_v}}=\prod_{\tau \in \Sigma_v} \omega^{1-k_{\tau}^{\mu}-l_{\tau}^{\mu}}_{\tau}.
\]
\end{lemma}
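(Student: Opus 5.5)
The plan is to argue by contradiction, using the explicit description of reductions of crystalline representations with small Hodge--Tate weights due to Gee--Liu--Savitt \cite{GLS14} (the input already used in \cite{hwcrys}). First I simplify the weights. By condition (2) of Theorem \ref{locglobviceversa} there is no $\tau$ with $k_\tau=2$ and $k_{\Frob^{-1}\circ\tau}=1$. So every $\mu\in\tilde M$ satisfies $3\le k_\mu\le p$ and $k_{\Frob^{-1}\circ\mu}=1$, with $\Frob^{-1}\circ\mu\in M$.

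From the formulae in the proof that $f'$ and $f^\mu$ have regular weight, $k^\mu$ and $k'$ agree away from $\mu$, and $k^\mu_\mu=k_\mu+1=k'_\mu+2$. Moreover $l^\mu=-e_\mu$ and $l'=0$. Hence the excluded shape reads
\[
\chi_1|_{I_{F_v}}=\omega_\mu,\qquad \chi_2|_{I_{F_v}}=\omega_\mu^{-1-k'_\mu}\prod_{\tau\in\Sigma_v\setminus\{\mu\}}\omega_\tau^{1-k'_\tau}.
\]
I note immediately that $\chi_1\chi_2|_{I}=\prod_\tau\omega_\tau^{1-k'_\tau}$. This is the determinant forced by a crystalline lift of weight $(k',0)$, so the determinant alone gives no contradiction, and a finer invariant is needed.

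Suppose, then, that $\rho|_{G_{F_v}}$ has this shape. Since it has a crystalline lift of weight $(k',0)$ with all $2\le k'_\tau\le p+1$, \cite[Theorem A]{GLS14} (as used in \cite{GLS15}) applies. It says that a reducible $\rho|_{G_{F_v}}\cong\mtwosmall{\psi_1}{\ast}{0}{\psi_2}$ must have, for some subset $J\subseteq\Sigma_v$, restrictions to inertia of the form
\[
\psi_1|_{I}=\prod_{\tau\in J}\omega_{\tau}^{1-k'_\tau}\quad\text{and}\quad \psi_2|_{I}=\prod_{\tau\notin J}\omega_\tau^{1-k'_\tau},
\]
up to the standard Frobenius shift of exponents in the non-Fontaine--Laffaille range. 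If $\rho|_{G_{F_v}}$ is split, either ordering of the two characters may serve as $(\psi_1,\psi_2)$. I then compare this with $\chi_1|_I=\omega_\mu$, that is, with $\omega_{\tau_0}^{e}$ where $e$ is the base-$p$ number whose only nonzero digit is a $1$ in the position of $\mu$. The comparison amounts to an equality of exponents modulo $p^{f_v}-1$, where $f_v=|\Sigma_v|$.

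The main step, and the expected obstacle, is the digit-matching argument showing that no $J$ gives $\prod_{\tau\in J}\omega_\tau^{1-k'_\tau}=\omega_\mu$, equivalently $\prod_{\tau\in J}\omega_\tau^{k'_\tau-1}=\omega_\mu^{-1}$. The digits $k'_\tau-1$ lie in $[1,p]$. Since $k'_{\Frob^{-1}\circ\mu}=p+1$, the digit $p$ occurs and produces carries, so uniqueness of base-$p$ expansions cannot be quoted directly. I would first normalise by rewriting each digit $p$ as a carry into the neighbouring position. After that I would use the constraints $k_\mu\ge3$ and $k'_\mu=k_\mu-1\ge2$ to show that the digit at $\mu$ cannot match in either case, $\mu\in J$ or $\mu\notin J$. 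Condition (3) of Theorem \ref{locglobviceversa} rules out the degenerate case $J=\Sigma_v$ with all digits equal to $p$, where $\omega^{p^{f_v}-1}$ becomes trivial.

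The same digit comparison then handles the split case, where $\chi_2$ plays the role of $\psi_1$. If a coincidence survives in some boundary configuration, I would additionally bring in the crystalline lift of weight $(k^\mu,l^\mu)$ itself. By \cite[Theorem 1.2]{hwcrys} this forces $\rho|_{G_{F_v}}$ to also admit a lift of the irregular weight $(k_\tau,0)_{\tau\in\Sigma_v}$, whose reducible reductions have $k_\mu\le p$ without carries. Comparing the two resulting descriptions should then give the contradiction.
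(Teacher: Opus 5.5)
Your framework matches the paper's. Assuming the excluded shape gives $\chi_1|_{I_{F_v}}=\omega_\mu$. The lift of weight $(k',0)$ and \cite{GLS14} then give $\omega_\mu=\prod_{\tau\in J}\omega_\tau^{1-k'_\tau}$ for some $J\subseteq\Sigma_v$ (in this unshifted form, as in the paper). The split case only replaces $J$ by $\Sigma_v\setminus J$, so letting $J$ vary already covers it. One then compares exponents modulo $p^{f_v}-1$.

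\textbf{The gap.} That comparison is the whole content of the lemma, and the route you sketch for it fails: the contradiction cannot be read off from the digit at $\mu$. In your normalisation the target $\omega_\mu^{-1}$ has digit $p-2$ at $\mu$ and $p-1$ elsewhere. If $\mu\in J$ and $k_\mu=p$ (allowed by condition (1)), the left side also has digit $k'_\mu-1=p-2$ at $\mu$. For instance, take $p=5$, $f_v=2$, $(k_\mu,k_{\Frob^{-1}\circ\mu})=(5,1)$ and $J=\{\mu\}$. The exponents are $3\cdot 5$ versus $3\cdot5+4$, which agree at $\mu$ and differ only at $\Frob^{-1}\circ\mu$. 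For $p=3$ the digit at $\mu$ can match even when $\mu\notin J$, via the carry from a digit $p$ at $\Frob^{-1}\circ\mu$.

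Two of your inputs are also wrong. The value $k'_{\Frob^{-1}\circ\mu}$ is $p$ or $p+1$ according as $k_{\Frob^{-2}\circ\mu}=1$ or not; it is $p$ for $\mu=\tau_2$ in the cubic example $k=(1,1,k_2)$. So ``$\Frob^{-1}\circ\mu\in M$'' and ``$k'_{\Frob^{-1}\circ\mu}=p+1$'' are not correct in general, and they cannot both hold. Your fallback through \cite[Theorem 1.2]{hwcrys} would need a description of reducible reductions with equal Hodge--Tate weights. \cite{GLS14} does not supply this, and you do not specify a source.

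\textbf{The missing idea: use all digits at once.} Put $\tau_0=\mu$ and let $d_i$ be the exponent of $\omega_{\tau_i}$ in $\omega_\mu\prod_{\tau\in J}\omega_\tau^{k'_\tau-1}$; these are the paper's $s^\mu_i-s'_i$.
\begin{itemize}
\item $d_0\in\{1,k_\mu-1\}\subseteq[1,p-1]$ and $d_i\in[0,p]$ for $i\neq 0$. Hence $N=\sum_i d_i p^{f_v-1-i}$ satisfies $0<N<2(p^{f_v}-1)$, using $p\ge 3$.
\item Triviality on inertia forces $N=p^{f_v}-1$. Since $0\le d_i\le p$, induction from the least significant digit (no carries can occur) gives $d_i=p-1$ for every $i$.
\item The paper reaches the same conclusion via \cite[Lemma 7.1]{GLS14}: an entry $p-1$ must lie in a string starting with $-1$, which would need some $s'_i=1$, and that is impossible.
\item ``All $d_i=p-1$'' is impossible. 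Let $\Frob^{-1}\circ\mu,\dots,\Frob^{-m}\circ\mu$ be the maximal run of embeddings with $k=1$ preceding $\mu$. Condition (2) gives $M=\{\tau : k_{\Frob^{-1}\circ\tau}=1\}$, so $\Frob^{-m}\circ\mu\notin M$. Hence $k'_{\Frob^{-m}\circ\mu}=p+1$, and its digit is $0$ or $p$, never $p-1$.
\end{itemize}
The last point is exactly what justifies the paper's unexplained assertion that $s^\mu_i-s'_i\neq p-1$ for some $i$.

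\textbf{A slip.} The exponent of $\chi_2|_{I_{F_v}}$ at $\mu$ is $1-k^\mu_\mu-l^\mu_\mu=-k'_\mu$, not $-1-k'_\mu$. Your own determinant check requires this.
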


\begin{proof}
Fix an element $\mu \in \tilde{M}$ such that $\mu \in \Sigma_v$ and suppose that $\rho|_{G_{F_v}}$ is of the form above. Since our Hodge--Tate weights are small and distinct, we can use \cite{GLS14} to describe the restriction to inertia of the local representations (noting the opposite $p$-adic Hodge theory conventions). We will use this to obtain a contradiction.

For each $\tau \in \Sigma_v$ we write:
\[ 
s^{\mu}_{\tau}=-l^{\mu}_{\tau} \text{ and } t^{\mu}_{\tau}=    1-k^{\mu}_{\tau}-l^{\mu}_{\tau}, 
\]
so that 
\[
s^{\mu}_{\tau}=
\begin{cases} 
1, & \tau=\mu, \\
0, & \tau \neq \mu,
\end{cases}
\text{   and } t^{\mu}_{\tau}=
\begin{cases}
2-k^{\mu}_{\tau},  & \tau=\mu, \\
1-k^{\mu}_{\tau}, & \tau \neq \mu.
\end{cases}
\]
 For a subset $J \subset \Sigma_v$, we let
\[
s'_{\tau}=\begin{cases} 
1-k'_{\tau},& \tau \in J, \\
0,& \tau \not\in J,
\end{cases}
\text{ and } 
t'_{\tau}=\begin{cases} 
0, & \tau \in J, \\
1-k'_{\tau}, & \tau \not \in J, 
\end{cases}
\]
(note $l'_{\tau}=0$ for all $\tau$). 

We label the embeddings, but now choosing $\tau_0=\mu$. We write $s'_{i}$ for $s'_{\tau_i}$ and $s^{\mu}_i$ for $s^{\mu}_{\tau_i}$. Using \cite[Corollary 7.11]{GLS14}, given that $\rho|_{G_{F_v}}$ has lifts of both weights $(k'_{\tau},l'_{\tau})_{\tau \in \Sigma_v}$ and of weights $(k^{\mu}_{\tau},l^{\mu}_{\tau})_{\tau \in \Sigma_v}$ we obtain the following congruence
\[
\sum_{i=0}^{f-1} s'_i p^{f-i-1} \equiv \sum_{i=0}^{f-1} s^{\mu}_i p^{f-i-1} \mod p^f-1. 
\]
for some subset $J$ of $\Sigma_v$.
Since $s^{\mu}_i - s'_i \in [-p,p]$, we can use \cite[Lemma 7.1]{GLS14} to check whether this congruence is satisfied. Since $p$ is odd and we must have $s^{\mu}_i - s'_i \neq p-1$ for some $i$, the lemma implies the sequence $(s^{\mu}_i-s'_i)_i$ must consist of strings of $(0,\dots,0)$ and $\pm (-1,p-1,\dots,p-1,p)$ (with possibly no entries of $p-1$).

We find that
\[
s_0^{\mu}-s_0' = \begin{cases}
    1-(1-k_0')=1-(2-k_0)=k_0-1, & 0 \in J, \\
    1-0=1, & 0  \not \in J,
\end{cases}
\]
but we have
\[
s_1^{\mu}-s_1' = \begin{cases}
    -(1-k_1')=1-(1-p)=p, & 1  \in J \text{ and } 1 \not \in M,  \\
    -(1-k_1')=1-(1-(p-1))=p-1, & 1 \in J \text{ and } 1 \in M,\\
    0, & 1  \not \in J.
\end{cases}
\]
We cannot have $0 \not \in J$ as then we would have that $(s_0^{\mu}-s'_0,s_1^{\mu}-s'_1) \in (1,p-1),(1,p)$ or $(1,0)$ which gives a contradiction. So assume $0 \in J$ so that $s_0^{\mu}-s_0'=k_0-1$. Note by our assumptions $k_0 \neq 2$, so this implies $k_0=p$. This gives $s_0^{\mu}-s_0'=p-1$, which means we need $s_i^{\mu}-s_i'=-1$ for some $i$. But we have that $s^{\mu}_i=0$ whenever $i \neq 0$. So we need $s_i'=1$ for some $i$. However, letting $J_0=\{\tau \in \Sigma \,|\, k_\tau=1\}$, we have
\[
s_i'= 
\begin{cases}
    0, & i \not \in J, \\
    1-p, & i \in J, i \in J_0, i \in M, \\
    -p, & i \in J, i \in J_0, i \not \in M, \\
    1-k_i, & i \in J, i \not \in J_0, i \not \in M, \\
    2-k_i, & i \in J, i \not \in J_0, i \in M, 
\end{cases}
\]
and since $k_i \geq 2$ for all $i \not \in J_0$ we find that $s_i' \neq 1$ for all $i$, so we again obtain a contradiction.
\end{proof}

\subsection*{Acknowledgements}
I would like to thank Fred Diamond for suggesting the problem and for his support and suggestions. I would also like to thank both Fred Diamond and James Newton for comments on earlier versions of this paper. As the reader will have noticed, this paper builds on the work of Diamond--Sasaki and it is a pleasure to thank them both for conversations about their work.

Earlier parts of this work were supported by the Herchel Smith Postdoctoral Fellowship Fund and by the Engineering and Physical Sciences Research Council, through the EPSRC Centre for Doctoral Training in Geometry and Number Theory~[EP/L015234/1] (the London School of Geometry and Number Theory) at University College London and through the grant `Modular representation theory, Hilbert modular forms and the geometric Breuil-Mézard conjecture'~[EP/W001683/1].


\bibliographystyle{alpha}

\bibliography{references}

\maketitle

\end{document}